\newcommand{\fordsix}[8]{
\begin{tikzpicture}[baseline=(a.base),scale=#1,every node/.style={scale=#2},inner sep=0pt,outer sep=0pt]
\node (a) at (1,0) {$#3$};
\node at (2,0) {$#4$};
\node at (3,0) {$#5$};
\node at (4,0) {$#6$};
\node at (5,.85) {$#7$};
\node at (5,-.85) {$#8$};
\end{tikzpicture}
}
\newlength{\halogap}\setlength{\halogap}{1.3em}
\tikzset{Rightarrow/.style={double equal sign distance,>={Implies},->},
triple/.style={-,preaction={draw,Rightarrow}}}
\tikzset{DynkinNode/.style={circle,draw,thick,minimum size=.7em,inner sep=0pt,font=\scriptsize}}
\newcommand{\GDynkin}[1]{
\begin{tikzpicture}[scale=.8,anchor=base,baseline]
\foreach\kthweight[count=\k] in {#1}{
\ifnum\k=1\node[DynkinNode] (1) at (-1,0) {$\scriptscriptstyle\kthweight$};\fi
\ifnum\k=2\node[DynkinNode] (2) at (-2,0) {$\scriptscriptstyle\kthweight$};\fi
}
\draw[->,triple,thick] (2) -- (1);
\end{tikzpicture}
}
\newcommand{\FDynkins}[1]{
\begin{tikzpicture}[scale=.6,anchor=base,baseline]
\foreach\kthweight[count=\k] in {#1}{
\ifnum\k=1\node[DynkinNode] (1) at (-1,0) {$\scriptscriptstyle\kthweight$};\fi
\ifnum\k=2\node[DynkinNode] (4) at (-4,0) {$\scriptscriptstyle\kthweight$};\fi
\ifnum\k=3\node[DynkinNode] (2) at (-2,0) {$\scriptscriptstyle\kthweight$};\fi
\ifnum\k=4\node[DynkinNode] (3) at (-3,0) {$\scriptscriptstyle\kthweight$};\fi
}
\draw[thick] (1) -- (2);
\draw[double equal sign distance,>={Implies},->,thick] (3) -- (2);
\draw[thick] (3) -- (4);
\end{tikzpicture}
}
\newcommand{\FDynkino}[2]{
\begin{tikzpicture}[scale=.6,anchor=base,baseline]
\foreach\kthweight[count=\k] in {#1}{
\ifnum\k=1\node[DynkinNode] (1) at (-1,0) {$\scriptscriptstyle\kthweight$};\fi
\ifnum\k=2\node[DynkinNode] (4) at (-4,0) {$\scriptscriptstyle\kthweight$};\fi
\ifnum\k=3\node[DynkinNode] (2) at (-2,0) {$\scriptscriptstyle\kthweight$};\fi
\ifnum\k=4\node[DynkinNode] (3) at (-3,0) {$\scriptscriptstyle\kthweight$};\fi
}
\draw[thick] (1) -- (2);
\draw[double equal sign distance,>={Implies},->,thick] (3) -- (2);
\draw[thick] (3) -- (4);
\foreach\Node[count=\i] in {#2}
{\coordinate (AuxNode-\i) at ($(\Node)$);}
\draw[red,rounded corners] ($(AuxNode-1)+(-\halogap,\halogap)$) --  ($(AuxNode-2)+(\halogap,\halogap)$)
    -- ($(AuxNode-2)+(\halogap,-\halogap)$) -- ($(AuxNode-1)+(-\halogap,-\halogap)$)--
    cycle;
\end{tikzpicture}
}
\newcommand{\EDynkin}[2]{
\begin{tikzpicture}[scale=.4,anchor=base,baseline]
\foreach\kthweight[count=\k] in {#1}{
\pgfmathtruncatemacro{\prevnode}{\k-1}
\ifnum\k=1\node[DynkinNode] (\k) at (0,1) {$\scriptscriptstyle\kthweight$};\fi % exchanged 1 and 2
\ifnum\k>1\node at (4-\k,0) {$\scriptscriptstyle\kthweight$};
\node[DynkinNode] (\k) at (4-\k,0) {$\scriptscriptstyle\kthweight$};
\ifnum\k>2\draw[thick] (\k) -- (\prevnode);\fi
\ifnum\k=4\draw[thick] (\k) -- (1);\fi
\fi
}
\foreach\Node[count=\i] in {#2}
{
\xdef\imax{\i}
\coordinate (AuxNode-\i) at ($(\Node)$);
}
\ifnum\imax=3%
\draw[red,rounded corners] ($(AuxNode-1)+(-\halogap,\halogap)$) -|  ($(AuxNode-2)+(-\halogap,\halogap)$)
    -- ($(AuxNode-2)+(\halogap,\halogap)$) |-
    ($(AuxNode-3)+(\halogap,\halogap)$)
    --($(AuxNode-3)+(\halogap,-\halogap)$) -- ($(AuxNode-1)+(-\halogap,-\halogap)$) -- cycle;
\else\ifnum\imax=2%
\draw[red,rounded corners] ($(AuxNode-1)+(-\halogap,\halogap)$) --  ($(AuxNode-2)+(\halogap,\halogap)$)
    -- ($(AuxNode-2)+(\halogap,-\halogap)$) -- ($(AuxNode-1)+(-\halogap,-\halogap)$)--
    cycle;
\fi
\fi
\end{tikzpicture}
}
\newtheorem{theorem}{Theorem}[section]
\newtheorem{lemma}[theorem]{Lemma}
\newtheorem{corollary}[theorem]{Corollary}
\newtheorem{proposition}[theorem]{Proposition}
\newtheorem*{lemma*}{Lemma}
\theoremstyle{definition}
\newtheorem{definition}[theorem]{Definition}
\theoremstyle{remark}
\newtheorem{remark}[theorem]{Remark}
\newtheorem{example}[theorem]{Example}
\newtheorem{examples}[theorem]{Examples}
\let\ge\geqslant
\let\le\leqslant
\let\eps\varepsilon
\newcommand\g{{\mathfrak g}}
\DeclareMathOperator\ad{ad}
\DeclareMathOperator\sla{\mathfrak{sl}}
\title{On Dynkin gradings in simple Lie algebras}
\author[$^1$]{A. G. Elashvili}
\author[$^1$]{M. Jibladze}
\author[$^2$]{V. G. Kac}
\affil[$^1$]{Razmadze Mathematical Institute, TSU, Tbilisi 0186, Georgia}
\affil[$^2$]{Department of Mathematics, MIT, Cambridge MA 02139, USA}
\date{}
\begin{document}

\maketitle

\

{\em To Anthony Joseph for his 75th birthday}

\

\

\section*{Introduction}

In this paper we study Dynkin gradings on simple Lie algebras arising from nilpotent elements. Specifically, we investigate abelian subalgebras which are degree 1 homogeneous with respect to these gradings.

The study of gradings associated to nilpotent elements of simple Lie algebras is important since the finite and affine classical and quantum W-algebras are defined using these gradings. In order to study integrable systems associated to these W-algebras, it is useful to have their free field realizations. One of the ways to construct them is to use the generalized Miura map \cite{dSKV, KW}. This construction can be further improved by choosing an abelian subalgebra in the term $\g_1$ of the grading. That is why the description of such subalgebras, especially the ones of dimension equal half of the dimension of $\g_1$ (which is maximal possible), is important.

We show that for each odd nilpotent orbit there always exists a canonically associated ``strictly odd'' nilpotent orbit, which allows us to reduce our investigations to the latter. (Strictly odd means that all Dynkin labels are either 0 or 1.) The rest of the paper is devoted to the investigation of maximal abelian subalgebras in $\g_1$ for strictly odd nilpotents in simple Lie algebras. For algebras of exceptional type we provide tables with largest possible dimensions of such subalgebras in each case. For algebras of classical type, we find expressions for all possible maximal dimensions of abelian subalgebras in $\g_1$, and, based on that, characterize those nilpotents for which there exists such subalgebra of half the dimension of $\g_1$.%changed

\section{Recollections}\label{recoll}

Let us recall the nomenclature for nilpotents in a semisimple Lie algebra $\g$.

Given such a nilpotent $e$, one chooses an $\sla_2$-triple $(e,h,f)$ for it, that is, another nilpotent $f$ such that $[e,f]=h$ is semisimple and the identities $[h,e]=2e$, $[h,f]=-2f$ hold (Jacobson-Morozov theorem; see e.~g. \cite{CM}). The Dynkin grading is the eigenspace decomposition for $\ad h$:
$$
\g=\bigoplus_{j\in\mathbb Z}\g_j.
$$
Then, to $e$ one assigns a combinatorial object which determines it up to isomorphism. It is the \emph{weighted Dynkin diagram} corresponding to $e$, which is the Dynkin diagram of $\g$ with numbers assigned to each node. These numbers are the degrees $\alpha_i(h)$ of simple root vectors $e_i$ with respect to the choice of a Cartan and a Borel subalgebra in such a way that $h$ (resp. $e$) becomes an element of the corresponding Cartan (resp. Borel) subalgebra. The weighted Dynkin diagrams satisfy certain restrictions --- for example, the weights can only be equal to $0$, $1$ or $2$; moreover if $\g$ is simple of type A, then the weights are symmetric with respect to the center of the diagram, while for types B, C or D there is no weight $1$ occurring to the left of $2$.

The nilpotent is called \emph{even} if there are no $1$'s in its weighted Dynkin diagram, \emph{odd} if it is not even, and \emph{strictly odd} if there are no $2$'s.

It is clear that for even nilpotents the question about abelian subspaces in $\g_1$ is trivial since $\g_1$ is zero.

We will also need the following fact from \cite{Elashvili}:
\begin{proposition}\label{g1g0}
The degree $1$ part $\g_1$ of $\g$ with respect to the grading induced by a nilpotent $e\in\g$ is generated as a $\g_0$-module by those simple root vectors of $\g$ which have weight $1$ in the weighted Dynkin diagram corresponding to $e$.
\end{proposition}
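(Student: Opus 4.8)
The plan is to set $W$ equal to the $\g_0$-submodule of $\g_1$ generated by the simple root vectors $e_i$ with weight $\alpha_i(h)=1$, and to prove $W=\g_1$. Since $\g_0$ contains the chosen Cartan subalgebra, both $W$ and $\g_1$ are direct sums of root spaces $\g_\alpha$, so it suffices to show $\g_\alpha\subseteq W$ for every root $\alpha$ with $\alpha(h)=1$. I would do this by contradiction: assuming $W\ne\g_1$, choose among the roots $\alpha$ with $\alpha(h)=1$ and $\g_\alpha\not\subseteq W$ one, call it $\mu$, that is minimal for the partial order in which $\alpha$ precedes $\beta$ iff $\beta-\alpha$ is a non-negative integer combination of simple roots. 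Because the Dynkin weights are $0$, $1$, or $2$, every root of positive $h$-weight is a positive root, so $\mu=\sum_k n_k\alpha_k$ with all $n_k\ge0$; then $\sum_k n_k\alpha_k(h)=1$ forces exactly one weight-$1$ simple root $\alpha_i$ to occur in $\mu$, with coefficient $1$, and every other simple root occurring in $\mu$ to have weight $0$. Hence $\mu=\alpha_i+\beta$ with $\beta$ a non-negative combination of weight-$0$ simple roots. If $\beta=0$ then $\g_\mu=\mathbb{C}e_i\subseteq W$, a contradiction, so $\beta\ne0$.

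Next I would use minimality to restrict $\mu$ further. For a weight-$0$ simple root $\alpha_j$, if $\mu-\alpha_j$ were a root it would have $h$-weight $1$ and precede $\mu$, so $\g_{\mu-\alpha_j}\subseteq W$ by minimality, and then $\g_\mu=[\g_{\alpha_j},\g_{\mu-\alpha_j}]\subseteq[\g_0,W]\subseteq W$, using $\g_{\alpha_j}\subseteq\g_0$ and the standard fact that $[\g_\gamma,\g_\delta]=\g_{\gamma+\delta}$ whenever $\gamma$, $\delta$ and $\gamma+\delta$ are roots --- a contradiction. So $\mu-\alpha_j$ is not a root, hence $(\mu,\alpha_j)\le0$, for every weight-$0$ simple root $\alpha_j$. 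Plugging this into $0<(\mu,\mu)=(\mu,\alpha_i)+\sum_j n_j(\mu,\alpha_j)$ gives $(\mu,\alpha_i)>0$, so $\beta=\mu-\alpha_i$ is in fact a root; being a non-negative combination of weight-$0$ simple roots, it is a positive root of the semisimple part of $\g_0$, and its height cannot be $1$ (else the root $\mu-\beta=\alpha_i$ would contradict the previous sentence). Thus $\beta$ has height $\ge2$, and consequently some weight-$0$ simple root $\alpha_j$ occurring in $\beta$ satisfies $(\beta,\alpha_j)>0$ with $\beta-\alpha_j$ a root.

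The final step, which I expect to be the crux, is to re-assemble $\mu$ cleverly. From $(\mu,\alpha_j)\le0$ and $(\beta,\alpha_j)>0$ we get $(\alpha_i,\alpha_j)=(\mu,\alpha_j)-(\beta,\alpha_j)<0$, so the distinct simple roots $\alpha_i$ and $\alpha_j$ sum to a root. Now $\mu=(\alpha_i+\alpha_j)+(\beta-\alpha_j)$ writes $\mu$ as a sum of two roots whose sum is again a root, so $\g_\mu=[\g_{\beta-\alpha_j},\g_{\alpha_i+\alpha_j}]$. Here $\g_{\beta-\alpha_j}\subseteq\g_0$ because $\beta-\alpha_j$ has $h$-weight $0$, while $\alpha_i+\alpha_j$ has $h$-weight $1$ and precedes $\mu$ (their difference $\beta-\alpha_j$ is a positive root), so $\g_{\alpha_i+\alpha_j}\subseteq W$ by minimality; therefore $\g_\mu\subseteq[\g_0,W]\subseteq W$, the contradiction we wanted. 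The subtle point is exactly this manoeuvre: a direct induction that subtracts a single weight-$0$ simple root from $\mu$ fails because $\mu-\alpha_j$ need not be a root, and the fix is to factor the bracket through the auxiliary small root $\alpha_i+\alpha_j$, which the two inner-product inequalities legitimise. Everything else is routine root-system bookkeeping: that positive roots have non-negative $h$-weight, that $(\gamma,\delta)>0$ with $\gamma\ne\pm\delta$ forces $\gamma-\delta$ to be a root, that distinct simple roots with negative inner product sum to a root, and that $[\g_\gamma,\g_\delta]=\g_{\gamma+\delta}$ when $\gamma$, $\delta$, $\gamma+\delta$ are all roots.
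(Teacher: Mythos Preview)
Your argument is correct. The paper itself does not supply a proof of this proposition: it simply quotes the result from \cite{Elashvili} and marks it with \qed. So there is no ``paper's approach'' to compare against; you have filled in what the authors chose to cite.

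That said, your proof can end earlier than you think. Once you have established in your step that $\beta=\mu-\alpha_i$ is a root, you are done: $\beta$ has $h$-weight $0$, so $\g_\beta\subseteq\g_0$, while $e_{\alpha_i}\in W$ by definition; hence
\[
\g_\mu=[\g_\beta,\g_{\alpha_i}]\subseteq[\g_0,W]\subseteq W,
\]
the desired contradiction. The subsequent manoeuvre --- passing to an $\alpha_j$ with $(\beta,\alpha_j)>0$, showing $(\alpha_i,\alpha_j)<0$, and rewriting $\mu=(\alpha_i+\alpha_j)+(\beta-\alpha_j)$ --- is correct but unnecessary. You anticipated a difficulty (that subtracting a single weight-$0$ simple root from $\mu$ might not give a root) which your inner-product argument on $(\mu,\mu)$ has already circumvented by producing the root $\beta$ directly. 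In effect, your proof contains two independent ways of reaching $\g_\mu\subseteq W$, and either one suffices.
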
\qed

If $\g$ is a simple Lie algebra of classical type, one can assign to $e$ another combinatorial object --- a partition $\lambda_n\geqslant\lambda_{n-1}\geqslant\cdots$ which records dimensions of irreducible representations of $\sla_2$ into which the standard representation of $\g$ decomposes as a module over its subalgebra $(e,h,f)$. Alternatively, the partition consists of sizes of Jordan blocks in the Jordan decomposition of $e$ as an operator acting on the standard representation of $\g$. The partitions are restricted in a certain way, according to the type of $\g$. For type A one may have arbitrary partitions. For types B and D, all even parts must have even multiplicity, while for type C all odd parts must have even multiplicity. These conditions are sufficient as well as necessary, that is, any partition satisfying these conditions corresponds to a nilpotent orbit in a simple Lie algebra of the respective classical type.

Let us recall how one switches from a partition representing a nilpotent to its weighted Dynkin diagram (cf. \cite{SS}).

Each $\lambda_k$ in the partition represents a copy of the $\lambda_k$-dimensional irreducible representation of $\sla_2$, with eigenvalues of $h$ equal to
$$
1-\lambda_k,3-\lambda_k,...,\lambda_k-3,\lambda_k-1.
$$
To obtain the weighted Dynkin diagram one collects from each $\lambda_k$ those eigenvalues, arranges them in decreasing order, and takes consecutive differences.

For example, take the partition $8,6,3,3,2,1,1$. This gives the following eigenvalues of $h$:

\

\

$$
\begin{tabular}{rrrrrrrrrrrrrrr}
-7&  &-5&  &-3&  &-1& &1&  &3&  &5&  &7\\
  &  &-5&  &-3&  &-1& &1&  &3&  &5\\
  &  &  &  &  &-2&  &0& &2\\
  &  &  &  &  &-2&  &0& &2\\
  &  &  &  &  &  &-1& &1\\
  &  &  &  &  &  &  &0\\
  &  &  &  &  &  &  &0
\end{tabular}
$$

\

\

Arranging all numbers from this table in the decreasing order gives
$$
\begin{tabular}{cccccccccccccccccccccccc}
\phantom-7&\phantom-5&\phantom-5&\phantom-3&\phantom-3&\phantom-2&\phantom-2&\phantom-1&\phantom-1&\phantom-1&\phantom-0&\phantom-0&\phantom-0&\phantom-0&-1&-1&-1&-2&-2&-3&-3&-5&-5&-7.
\end{tabular}
$$
Taking the consecutive differences then gives
$$
\begin{tabular}{cccccccccccccccccccccccc}
\phantom-2&\phantom-0&\phantom-2&\phantom-0&\phantom-1&\phantom-0&\phantom-1&\phantom-0&\phantom-0&\phantom-1&\phantom-0&\phantom-0&\phantom-0&\phantom-1&\phantom-0&\phantom-0&\phantom-1&\phantom-0&\phantom-1&\phantom-0&\phantom-2&\phantom-0&\phantom-2
\end{tabular}
$$
which is already the weighted Dynkin diagram of the nilpotent in case of type A.

For types B, C, D one has to leave only left half of the obtained sequence (which obviously is centrally symmetric); more precisely, for an algebra of rank $r$, the first $r-1$ nodes of the weighted Dynkin diagram are as stated, while the rightmost node is defined in a specific way, depending on the type. We skip this part, as it will not play any r\^ole for us; details can be found in e.~g. \cite[Section 5.3]{CM}.

For example, the same partition $8,6,3,3,2,1,1$ also encodes a nilpotent orbit in a simple Lie algebra of type C, since all of its odd parts come with even multiplicities. Then, the weighted Dynkin diagram of this nilpotent is
$$
\begin{tabular}{cccccccccccccccccccccccc}
2&\,0&\,2&\,0&\,1&\,0&\,1&\,0&\,0&\,1&\,0&\,0.
\end{tabular}
$$

It is easy to see from the above procedure that the resulting weighted Dynkin diagram begins with certain sequence of $0$'s and $2$'s; if the largest part of the partition is $\lambda_n$ with multiplicity $m_n$, and the parts of the same parity following it are $\lambda_{n-1}$ with multiplicity $m_{n-1}$, $\lambda_{n-2}$ with multiplicity $m_{n-2}$, ..., $\lambda_{n-k+1}$ with multiplicity $m_{n-k+1}$, while the next part $\lambda_{n-k}$ has the opposite parity, then
the first $1$ appears at the $(km_n+(k-1)m_{n-1}+...+2m_{n-k+2}+m_{n-k+1})$-st place. For the type A it reflects symmetrically, thus having weights $2$ and $0$ at both ends and weights $1$ and $0$ in the middle, while for types B, C or D it starts with a sequence of weights $0$ and $2$ followed by a sequence of weights $0$ and $1$, without any further $2$'s.

According to the above procedure for assigning to a partition a weighted Dynkin diagram, it is easy to see the following
\begin{proposition}
A nilpotent in a simple Lie algebra of classical type is even iff all the parts of the corresponding partition are of the same parity, is odd iff there are some parts with different parities, and strictly odd iff the largest part and the next largest part differ by $1$.
\end{proposition}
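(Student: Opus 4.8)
The plan is to extract both assertions directly from the recipe above. The single fact I would isolate first is a parity statement: a part $\lambda$ contributes to $h$ only eigenvalues congruent to $\lambda-1$ mod $2$, so an odd part produces even eigenvalues and an even part produces odd ones. Hence the multiset $M$ of all eigenvalues of $h$ on the standard representation consists of integers all of one parity if and only if all parts of the partition share a parity. I would then recall, from the discussion above, the structural facts: writing $M$ in decreasing order $a_1\ge\cdots\ge a_N$, this sequence is centrally symmetric, its consecutive differences lie in $\{0,1,2\}$, and the weighted Dynkin diagram is this sequence of differences in type A, while in types B, C, D it is the first $r-1$ of them together with one exceptional rightmost node; moreover the diagram is a stretch of $0$'s and $2$'s followed by a stretch of $0$'s and $1$'s (reflected symmetrically in type A), with no further $2$'s.

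For the even/odd split: by definition $e$ is even iff no weight equals $1$. A difference $a_i-a_{i+1}$ in $\{0,1,2\}$ equals $1$ exactly when $a_i,a_{i+1}$ have opposite parity, and a decreasing sequence has such a consecutive pair iff its terms do not all share a parity (otherwise transitivity of ``same parity'' would force them to). By the parity fact this says precisely that the parts are not all of one parity. In types B, C, D one must also inspect the exceptional node, which by the recipe in \cite[Section 5.3]{CM} is $a_r$, $2a_r$, or $a_{r-1}+a_r$ according to type; each is odd only if $a_{r-1}$ and $a_r$ differ in parity, and then the difference $a_{r-1}-a_r$, one of the first $r-1$ differences, is already $1$. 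So in every classical type $e$ is even iff all parts share a parity, and hence odd iff two parts have different parity. (If one prefers to avoid the diagram here: $e$ is even iff every eigenvalue of $\ad h$ on $\g$ is even, and these are among the numbers $\mu_i\pm\mu_j$ formed from the eigenvalues $\mu_\bullet$ of $h$ on the standard module, with enough of them occurring that oddness of some $\ad h$-eigenvalue is the same as two of the $\mu_i$ having different parity.)

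For strict oddness --- that is, $e$ odd with no weight $2$ --- I would locate the first nonzero weight. The largest eigenvalue is $\lambda_n-1$, occurring with multiplicity $m_n$, the number of parts equal to $\lambda_n$ (no smaller part reaches it), so the weights in positions $1,\dots,m_n-1$ vanish and the first nonzero weight occupies position $m_n$, with value $(\lambda_n-1)-a_{m_n+1}$. Now $a_{m_n+1}$ is $\lambda_n-2$ when $\lambda_n-1$ occurs as a part (the top rung of a $(\lambda_n-1)$-block) and is $\lambda_n-3$ otherwise (no block then yields $\lambda_n-2$, so $a_{m_n+1}$ is the second rung of a $\lambda_n$-block, using $\lambda_n\ge2$, as $\lambda_n=1$ forces $e=0$, which is even). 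So that first nonzero weight is $1$ precisely when $\lambda_n-1$ is a part and $2$ otherwise. Since every weight $2$ lies in the initial $0$-and-$2$ stretch, a weight $2$ occurs iff the first nonzero weight is $2$; equivalently, $e$ has no weight $2$ iff $\lambda_n-1$ occurs among the parts. When it does, the parts contain the consecutive integers $\lambda_n$ and $\lambda_n-1$, so $e$ is automatically not even; and ``$\lambda_n-1$ is a part'' is exactly ``the second largest distinct value among the parts equals $\lambda_n-1$''. Hence $e$ is strictly odd iff the largest part and the next largest part differ by $1$.

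The only real obstacle is bookkeeping, not an idea: checking that in types B, C, D the single exceptional rightmost node (and the truncation keeping only the first $r-1$ ordinary differences) never creates a weight $1$ or a weight $2$ not already present among the two-sided consecutive differences of $(a_i)$ --- in particular that when $m_n$ lands on the exceptional position the value $(\lambda_n-1)-a_{m_n+1}$ is still recorded. This is settled by the explicit formula for that node and the central symmetry of the eigenvalue sequence; it is the one place where the argument is not immediate once the parity fact and the block structure of the diagram are in hand.
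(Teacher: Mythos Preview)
The paper offers no proof beyond asserting that the statement is easy to see from the preceding procedure, so your detailed extraction is precisely the kind of argument implicitly intended; the parity observation for the eigenvalue list and the location of the first nonzero weight via the block structure are the right ideas.

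There is one genuine slip in your handling of the type~B exceptional node. You claim that for types B, C, D the exceptional weight ($a_r$, $2a_r$, or $a_{r-1}+a_r$ respectively) is odd only if $a_{r-1}$ and $a_r$ differ in parity. This fails for B: take the partition $[2,2,1]$ in $\mathrm{B}_2$, where the sorted eigenvalues are $1,1,0,-1,-1$, so $a_1=a_2=1$ have the same parity yet the exceptional weight $a_2=1$ is odd, and the diagram is $(0,1)$ with no $1$ among the first $r-1=1$ ordinary differences. The correct fix is immediate: for type~B the middle eigenvalue is $0$, so $a_r=a_r-a_{r+1}$ is itself the $r$-th consecutive difference of the full eigenvalue list, and likewise for type~C one has $2a_r=a_r-a_{r+1}$; thus in both cases the exceptional weight is literally one of the consecutive differences and the equivalence ``some weight equals $1$ $\Leftrightarrow$ some consecutive difference equals $1$'' needs no separate check. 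Only type~D requires your stated argument, and there it is valid. Your parenthetical alternative via the eigenvalues of $\ad h$ on $\g$ sidesteps this entirely and is the cleaner route.

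For strict oddness your argument is sound once one grants, as the paper asserts and you correctly flag as the residual bookkeeping, that the ``$0$'s and $2$'s, then $0$'s and $1$'s'' pattern governs the full diagram including the exceptional node.
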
\qed

\section{Important reduction}

Let $V$ and $U$ be finite-dimensional modules over a reductive Lie algebra $\g$ and let $V\otimes V\to U$ be a $\g$-module homomorphism. It is thus a $\g$-equivariant algebra structure on $V$ with values in $U$.

\begin{proposition}\label{basisprop}
Suppose that there exists an abelian subalgebra of dimension $d$ of the algebra $V$. Then there exists an abelian subalgebra of the algebra $V$ of dimension $d$, spanned by weight vectors of $V$.
\end{proposition}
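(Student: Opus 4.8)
The plan is to regard the set of $d$-dimensional abelian subalgebras of $V$ as a closed subvariety of a Grassmannian which is stable under the relevant algebraic group, and then to find inside it a point fixed by a maximal torus; such a point automatically decomposes as the direct sum of its intersections with the weight spaces of $V$, and is therefore spanned by weight vectors.

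Concretely, I would choose a connected reductive algebraic group $G$ with Lie algebra $\g$, together with actions of $G$ on $V$ and on $U$ integrating the given $\g$-actions and making the multiplication $m\colon V\otimes V\to U$ a $G$-equivariant map; such $G$ exists because the finite-dimensional representations of the reductive algebra $\g$ integrate, so one may take $G$ to be, say, the image of a connected group with Lie algebra $\g$ in $\mathrm{GL}(V)\times\mathrm{GL}(U)$. Fix a Cartan subalgebra $\mathfrak h\subseteq\g$, let $T\subseteq G$ be the corresponding maximal torus, and let $V=\bigoplus_\mu V_\mu$ be the weight decomposition. Put
\[
X=\bigl\{\,W\in\mathrm{Gr}(d,V):\ m(w\otimes w')=0\ \text{for all}\ w,w'\in W\,\bigr\}.
\]
This is Zariski closed in the Grassmannian, since $m$ induces a morphism of vector bundles $\mathcal S\otimes\mathcal S\to U\otimes\mathcal O$, with $\mathcal S$ the tautological subbundle, and $X$ is its zero locus (equivalently, in each standard affine chart of $\mathrm{Gr}(d,V)$ the defining conditions are polynomial). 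By hypothesis $X$ is nonempty, and from $m(gw\otimes gw')=g\,m(w\otimes w')$ it is $G$-stable.

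Now $X$, being closed in the projective variety $\mathrm{Gr}(d,V)$, is projective, and the torus $T$ (connected and solvable) acts on it, so by Borel's fixed point theorem there is a point $W_0\in X$ fixed by $T$. A $T$-stable subspace of $V$ satisfies $W_0=\bigoplus_\mu(W_0\cap V_\mu)$, and choosing a basis of each summand exhibits $W_0$ as a $d$-dimensional subspace spanned by weight vectors; the condition $W_0\in X$ says exactly that it is abelian. (If one prefers to avoid Borel's theorem: fix a cocharacter $\lambda$ of $T$ for which the integers $\langle\lambda,\mu\rangle$ are pairwise distinct over the weights $\mu$ of $V$; for any $W\in X$ the limit $W_0=\lim_{t\to0}\lambda(t)\cdot W$ exists by properness of the Grassmannian, lies in $X$ because $X$ is closed and stable under $\lambda(\mathbb G_m)$, and is fixed by $\lambda(\mathbb G_m)$, which by the choice of $\lambda$ forces $W_0=\bigoplus_\mu(W_0\cap V_\mu)$.)

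I do not expect a serious obstacle: this is the familiar principle that a $G$-stable closed subvariety of a Grassmannian meets the $T$-fixed locus. The only points deserving a line of care are that the $\g$-module structures exponentiate to a group action for which $m$ is equivariant --- immediate from reductivity of $\g$ --- and that $X$ is genuinely closed when phrased, as it must be, via the full (possibly non-symmetric) bilinear map $m$ on \emph{ordered} pairs of vectors; both are routine.
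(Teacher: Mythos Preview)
Your argument is correct and is exactly the approach the paper takes: the paper's proof (attributed to the referee) consists of the single observation that the Cartan subgroup acts on the complete variety of $d$-dimensional abelian subalgebras of $V$, so Borel's fixed point theorem yields a fixed point. Your write-up simply fleshes out the details the paper leaves implicit (closedness of $X$ in the Grassmannian, integration of the $\g$-action to a group, and the weight-space decomposition of a torus-stable subspace), together with an optional cocharacter-limit variant.
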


\begin{proof}[Proof \emph{(proposed by the referee)}]
It follows from Borel's fixed point theorem. Indeed, the Cartan subgroup acts on the complete variety of $d$-dimensional abelian subalgebras of $V$, hence has a fixed point.
\end{proof}

Using this, in what follows we will assume throughout that for a simple Lie algebra of classical type we are given a basis in the standard representation consisting of weight vectors corresponding to the weights $\pm\eps_i$, $i=1,...,n$ and moreover, for the type B, to the zero weight. In the adjoint representation, accordingly, we will have a basis corresponding to ${}\pm\eps_i\pm\eps_j$, $i\ne j$ (accounting for tensor products of basis vectors of the standard representation corresponding to $\pm\eps_i$ and to $\pm\eps_j$) and moreover, for the type B only, those corresponding to $\pm\eps_i$ (accounting for tensor product of a basis vector corresponding to $\pm\eps_i$ and that corresponding to the zero weight) and, for C only, corresponding to $\pm2\eps_i$ (accounting for the tensor product of a basis vector of the standard representation corresponding to $\pm\eps_i$ with itself), $i=1,...,n$.
%!
\begin{proposition}\label{reduction}
For any weighted Dynkin diagram corresponding to a nilpotent $e$ in a simple Lie algebra $\g$, consider a subdiagram obtained as a result of erasing all nodes with weight $2$. Consider the resulting subdiagram together with the remaining weights. Then all connected components of this subdiagram, except possibly one of them, have all weights equal to zero. Moreover this one component (if it exists) is a weighted Dynkin diagram of some strictly odd nilpotent orbit in the diagram subalgebra $\tilde\g\subseteq\g$ of the type determined by the shape of the component.
\end{proposition}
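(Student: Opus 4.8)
The plan is to treat $\g$ of classical and of exceptional type separately. For classical $\g$ I would reduce everything to the partition combinatorics recalled in Section~\ref{recoll}; for exceptional $\g$ the statement becomes a finite verification against the known list of nilpotent orbits.

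The first assertion is almost immediate from the shape of weighted Dynkin diagrams as recalled in Section~\ref{recoll}. Reading the diagram from left to right, it consists of an initial stretch with entries in $\{0,2\}$ followed by a stretch with entries in $\{0,1\}$; for types B, C, D no $2$ reappears once a $1$ has occurred, and for type A the diagram is centrally symmetric, so the $1$'s are confined to a central block flanked by two mirror-image $\{0,2\}$-stretches. Deleting the weight-$2$ nodes therefore cuts the $\{0,2\}$-part into components all of whose weights are $0$, whereas the $\{0,1\}$-part, containing no $2$, stays connected and absorbs every weight-$1$ node; so at most one component $C$ --- the one meeting that $\{0,1\}$-block, possibly enlarged by adjacent runs of $0$'s --- can carry a nonzero weight, and all its weights lie in $\{0,1\}$, so any nilpotent it represents is strictly odd by definition.

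For the second assertion, in the classical cases I would work in the standard representation $V$, on which $e$ has Jordan type $\lambda$ and $h$ is semisimple with the integer eigenvalues described in Section~\ref{recoll}; fix a weight basis $v_1,v_2,\dots$ of $V$ ordered by weakly decreasing $h$-eigenvalue. If $C=\{p,\dots,q\}$, the diagram subalgebra $\tilde\g$ spanned by the root vectors attached to the nodes of $C$ acts on the coordinate subspace $W\subseteq V$ spanned by $v_p,\dots,v_{q+1}$ (together with their mirror vectors, and the zero-weight vector in type B, exactly when $C$ reaches the distinguished end node), and $\tilde\g$ equals $\sla(W)$, $\mathfrak{sp}(W)$ or $\mathfrak{so}(W)$ according to the shape of $C$. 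Because a weight-$2$ node borders $C$ wherever $C$ does not reach an end of the diagram --- together with, in type A, the central symmetry noted above --- the subspace $W$ is spanned by precisely those weight vectors of $V$ whose $h$-eigenvalue $v$ satisfies $|v|\le T$, where $T$ is the largest such eigenvalue; hence the eigenvalue multiset of $h$ on $W$ is symmetric about $0$ and is the truncation to $|v|\le T$ of the corresponding multiset on $V$. Now $\tilde h:=h|_W$ lies in $\tilde\g$, and the key point is that its eigenvalue multiset on $W$ is that of a valid partition $\mu$ of $\dim W$: truncating a multiset of the form $\bigcup_k\{\lambda_k-1,\lambda_k-3,\dots,1-\lambda_k\}$ preserves the property that the multiplicity of $v$ be, within each parity class, weakly decreasing in $|v|$ (so $\mu$ is a partition), and it preserves the parity restrictions of types B, C, D, since the only new constraint imposed is that $\mu$ have an even number of odd (resp.\ even) parts, a number equal to the corresponding one for $\lambda$ and hence even by hypothesis. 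Therefore $\tilde h$ is the Dynkin characteristic of a strictly odd nilpotent $\tilde e\in\tilde\g$ of Jordan type $\mu$, and, the eigenvalues of $\tilde h$ being already in decreasing order, applying the procedure of Section~\ref{recoll} inside $\tilde\g$ returns exactly the component $C$ with its weights as the weighted Dynkin diagram of $\tilde e$.

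For $\g$ of exceptional type there is no standard representation to lean on, but there are only finitely many nilpotent orbits and their weighted Dynkin diagrams are tabulated, so both assertions reduce to direct inspection; the subalgebras $\tilde\g$ occurring this way are themselves of type A or D or of smaller exceptional type, so the same tables apply. I expect the only really delicate points to lie in the classical second assertion: getting the shape-to-type dictionary for $C$ exactly right --- in particular that the distinguished end node, when it belongs to $C$, is forced to have weight $0$ in types C and D, and the small amount of extra care needed at the branch node in type D --- and verifying that passing to the truncated eigenvalue multiset does not destroy the type-B/C/D parity conditions, which is precisely where the hypothesis that $\lambda$ is a partition of the given classical type is used.
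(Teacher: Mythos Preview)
Your approach matches the paper's: both handle classical types via the partition combinatorics of Section~\ref{recoll} --- your eigenvalue-truncation to $|v|\le T$ produces precisely the partition $[\ldots k^{m_k}(k+1)^{m_\ell+\cdots+m_n}]$ of Lemma~\ref{mainlemma} (indeed $T=k$, since the weight-$2$ node bordering $C$ forces $\operatorname{mult}(T+1)=0$ while the weight-$1$ node inside $C$ forces $\operatorname{mult}(T-1)>0$), with the same type-preservation check, and both dispatch the exceptional types by inspection of the tabulated orbits. One small slip to fix when you write it out: in type D the branch node can carry weight $1$ (e.g.\ the partition $[3,2^2,1]$ in $\mathrm D_4$ gives weights $1,0,1,1$), contrary to your parenthetical expectation, but this does not disturb the argument since $\tilde\g$ is then still of type D and the truncated partition remains D-admissible.
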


\begin{proof}
For algebras of classical type, this is proved in \ref{mainlemma} below. For algebras of type G$_2$ this is clear as all nilpotents in them are either even or strictly odd. As for exceptional Lie algebras of types E or F, the assertion can be seen to be true directly from looking at the tables {\bf F4o, E6o, E7o, E8o} given in the last section.
\end{proof}

\begin{corollary}\label{coreduction}
For any odd nilpotent $e$ in a simple Lie algebra $\g$ there exists a simple diagram subalgebra $\tilde\g\subseteq\g$ and a strictly odd nilpotent $\tilde e\in\tilde\g$ such that
$$
\g_1(e)=\tilde\g_1(\tilde e),
$$
i.~e. the degree 1 homogeneous parts for the grading on $\g$ induced by $e$ and for the grading on $\tilde\g$ induced by $\tilde e$ coincide. In particular, these degree 1 homogeneous parts have the same abelian subspaces.
\end{corollary}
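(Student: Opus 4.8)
First, the plan is to read $\tilde\g$ and $\tilde e$ off Proposition~\ref{reduction}, and then to identify $\g_1(e)$ with $\tilde\g_1(\tilde e)$ by combining a ``restriction of gradings'' argument with Proposition~\ref{g1g0}. Since $e$ is odd, its weighted Dynkin diagram has at least one node of weight~$1$; erasing the weight-$2$ nodes, the unique connected component $C$ with not-all-zero weights provided by Proposition~\ref{reduction} is then nonempty and contains every weight-$1$ node. Let $\tilde\g$ be the diagram subalgebra with simple roots $\{\alpha_i:i\in C\}$ (it is simple because $C$ is connected), and let $\tilde e$, with $\sla_2$-triple $(\tilde e,h_{\tilde e},\tilde f)$, be the strictly odd nilpotent of $\tilde\g$ whose weighted Dynkin diagram is $C$ with its inherited weights.

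Next I would check that the Dynkin grading of $\tilde\g$ attached to $\tilde e$ is exactly the restriction to $\tilde\g$ of the Dynkin grading of $\g$ attached to $e$. Indeed, for a root $\beta=\sum_{i\in C}c_i\alpha_i$ of $\tilde\g$ one has $\beta(h_{\tilde e})=\sum_{i\in C}c_iw_i=\beta(h_e)$, where $w_i$ denotes the weight at node $i$ and the terms with $i\notin C$ are absent since $c_i=0$ there; moreover both $h_e$ and $h_{\tilde e}$ annihilate the Cartan subalgebra of $\tilde\g$. Hence $\ad h_e$ and $\ad h_{\tilde e}$ agree on $\tilde\g$, so $\tilde\g_j(\tilde e)=\tilde\g\cap\g_j(e)$ for every $j$, and in particular $\tilde\g_1(\tilde e)\subseteq\g_1(e)$.

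The substance of the proof is the reverse inclusion $\g_1(e)\subseteq\tilde\g$. By Proposition~\ref{g1g0}, $\g_1(e)$ is spanned, as a $\g_0(e)$-module, by the weight-$1$ simple root vectors $e_{\alpha_j}$, each of which already lies in $\tilde\g$ because $j\in C$; that is, $\g_1(e)$ is spanned by the iterated brackets $(\ad x_1)\cdots(\ad x_k)\,e_{\alpha_j}$, where I may take each $x_m$ to be a weight vector of $\g_0(e)$, i.e.\ a Cartan element or a root vector $e_\beta$ with $\beta(h_e)=0$. Since all Dynkin weights are non-negative, such a $\beta$ is supported on weight-$0$ nodes only; hence each such iterated bracket is a scalar multiple of a root vector $e_\delta$ with $\delta(h_e)=1$ (or vanishes), the running weight remaining $1$ at each step. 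Such a $\delta$ is necessarily a positive root; writing $\delta=\sum_ic_i\alpha_i$ with $c_i\ge0$ and using $\sum_ic_iw_i=1$ with $w_i\in\{0,1,2\}$, there is exactly one index $i_0$ with $c_{i_0}=w_{i_0}=1$, while $c_i=0$ at every weight-$2$ node. So the support of $\delta$ contains $i_0$, is made up of $i_0$ together with weight-$0$ nodes, and meets no weight-$2$ node; being connected, it lies in the connected component of $i_0$ in the weight-$2$-erased diagram, which is not all-zero (it contains the weight-$1$ node $i_0$) and hence equals $C$. Thus $\delta$ is a root of $\tilde\g$ and $e_\delta\in\tilde\g$, which gives $\g_1(e)\subseteq\tilde\g\cap\g_1(e)=\tilde\g_1(\tilde e)$; combined with the previous paragraph, $\g_1(e)=\tilde\g_1(\tilde e)$, and the statement about abelian subspaces follows at once since the bracket on $\tilde\g$ is the restriction of that on $\g$. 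The one genuinely delicate point --- and the only place where Proposition~\ref{reduction} is really used --- is the appeal to the uniqueness of the non-trivial component, which is what pins the support of $\delta$ down to $C$.
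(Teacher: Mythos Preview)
Your proof is correct and follows the same overall strategy as the paper: read off $\tilde\g$ and $\tilde e$ from Proposition~\ref{reduction}, verify that the grading on $\tilde\g$ is the restriction of that on $\g$, and then establish $\g_1(e)\subseteq\tilde\g$. The paper handles this last inclusion a little differently: rather than analysing the support of an arbitrary degree-$1$ root, it stays at the module level and observes that every weight-$0$ simple root \emph{outside} $C$ is disconnected from $C$ (only weight-$2$ nodes can be adjacent to $C$ in the full diagram), so the corresponding simple root vectors commute with all of $\tilde\g$; hence the $\g_0(e)$-module generated by the weight-$1$ simple root vectors coincides with the $\tilde\g_0(\tilde e)$-module they generate. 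Your connected-support argument reaches the same conclusion more directly and, incidentally, makes your appeal to Proposition~\ref{g1g0} superfluous: since $\g_1(e)$ is by definition the $\ad h_e$-eigenspace of eigenvalue~$1$, it is already spanned by root vectors $e_\delta$ with $\delta(h_e)=1$, and you could proceed straight to the support analysis without the iterated-bracket detour.
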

\begin{proof}
Take for $\tilde\g$ the subalgebra corresponding to the connected component of the weighted Dynkin diagram of $e$ as described in \ref{reduction} above. Moreover let $\tilde e$ be any representative from the orbit corresponding to the weights on this connected component --- it exists by \ref{reduction}.

By construction this subalgebra contains all simple root vectors of degree 1, and moreover they will be precisely the root vectors of those simple roots of $\tilde\g$ which contribute to degree $1$ part in the grading induced by $\tilde e$. From \ref{g1g0} we know that $\g_1(e)$ is the $\g_0(e)$-module generated by these root vectors, while $\tilde\g_1(\tilde e)$ is the $\tilde\g_0(\tilde e)$-module generated by them.

Now observe that the only removed nodes which connect with an edge to some node in the remaining connected component have weight $2$, so that all simple root vectors corresponding to removed nodes with weight $0$ commute with every simple root vector in this component.

It follows that the $\g_0(e)$-module generated by the root vectors corresponding to weight 1 nodes is no larger than the $\tilde\g_0(\tilde e)$-module generated by them, i.~e. $\g_1(e)$ coincides with $\tilde\g_1(\tilde e)$.
\end{proof}

\begin{definition}\label{strodef}
For the orbit of an odd nilpotent in a simple Lie algebra $\g$, call its \emph{strictly odd reduction} the nilpotent orbit in the simple Lie algebra $\tilde\g$ obtained as in \ref{coreduction}.
\end{definition}

Given a nilpotent $e\in\g$ as in \ref{reduction}, one can explicitly produce a nilpotent $\tilde e\in\tilde\g$ from the orbit corresponding to its strictly odd reduction in the sense of \ref{strodef} as follows. The nilpotent $e$ clearly lies in the degree 2 subspace $\g_2$ for the corresponding grading. This subspace is a $\g_0$-module and decomposes canonically into the direct sum of its submodule $[\g_1,\g_1]$ and the submodule $\g_2(2)$ generated by the root vectors of $\g$ corresponding to simple roots with weight $2$.

\begin{proposition}\label{e1}
Given a nilpotent $e$, represent it (in a unique way) as a sum $e_1+e_2$ with $e_1\in[\g_1,\g_1]$ and $e_2\in\g_2(2)$. Then the weighted Dynkin diagram of $e_1$ in the subalgebra corresponding to the subdiagram described in \ref{reduction} is given by weights on that subdiagram.
\end{proposition}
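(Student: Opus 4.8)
The plan is to show that $e_1$ lies in the dense orbit of the degree‑zero group on the degree‑two part of $\tilde\g$, and then to recognize that orbit via Proposition~\ref{reduction}. Assume $e$ is odd (otherwise $\g_1=0$ and there is nothing to prove), so that the distinguished component $C$ of the erased diagram exists; let $\tilde\g$ be the corresponding simple diagram subalgebra and $\tilde h$ the element of its Cartan subalgebra with $\alpha_i(\tilde h)$ equal to the weight of $i$ for each node $i$ of $C$. For a root $\alpha$ of $\tilde\g$ one has $\alpha(\tilde h)=\alpha(h)$, so $\ad\tilde h$ agrees with $\ad h$ on $\tilde\g$ and $\tilde\g$ inherits the grading $\tilde\g_j=\tilde\g\cap\g_j$, which by Proposition~\ref{reduction} is the Dynkin grading of a strictly odd nilpotent.

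First I would record the combinatorics. A positive root $\alpha=\sum_i c_i\alpha_i$ of $\g$ has $\alpha(h)=1$ exactly when no weight-$2$ node lies in its support, exactly one weight-$1$ node does (with coefficient $1$), and all other nodes of the support have weight $0$; since the support of a root is connected and the weight-$1$ node it contains lies in $C$, the whole support lies in $C$. Hence $\g_1=\tilde\g_1$, and as there are no weight-$2$ nodes in $\tilde\g$, the decomposition $\g_2=[\g_1,\g_1]\oplus\g_2(2)$ recalled above, applied inside $\tilde\g$, collapses to $\tilde\g_2=[\tilde\g_1,\tilde\g_1]=[\g_1,\g_1]$; in particular $e_1\in\tilde\g_2$. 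By the same reasoning every weight-$0$ root of $\g$ is supported on a single connected component of the erased diagram.

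The core step is that $\ad e_1\colon\tilde\g_0\to\tilde\g_2$ is surjective. Recall $\ad e\colon\g_0\to\g_2$ is surjective (each weight-$2$ vector of an $\sla_2$-submodule of dimension $\ge3$ equals $\ad e$ of a weight-$0$ vector). For $x\in\tilde\g_0$ we have $[e,x]=[e_1,x]+[e_2,x]$ with $[e_1,x]\in\tilde\g_2=[\g_1,\g_1]$ and, since $\g_2(2)$ is a $\g_0$-submodule, $[e_2,x]\in\g_2(2)$; thus $[e_1,x]$ is the $[\g_1,\g_1]$-component of $[e,x]$. Now choose a complement $\mathfrak q=\mathfrak n\oplus\mathfrak t'$ of $\tilde\g_0$ in $\g_0$, with $\mathfrak n$ the sum of the weight-$0$ root spaces not supported on $C$ and $\mathfrak t'\subseteq\mathfrak t$ the annihilator of the simple roots of $\tilde\g$ (this is a complement because $\tilde\g$ is simple). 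A weight-$0$ root space not supported on $C$ lies in a diagram subalgebra attached to a component of the erased diagram distinct from $C$, and any such component is non-adjacent to $C$ in the full diagram, so it commutes with $\tilde\g\supseteq\g_1$; and $\mathfrak t'$ annihilates every root supported on $C$, in particular every weight-$1$ root. Hence $[\mathfrak q,\g_1]=0$, so $[\mathfrak q,e_1]=0$ by the Leibniz rule (as $e_1\in[\g_1,\g_1]$), so $[e,\mathfrak q]=[e_2,\mathfrak q]\subseteq\g_2(2)$. Projecting $[e,\g_0]=[e,\tilde\g_0]+[e,\mathfrak q]$ onto $[\g_1,\g_1]$ and using surjectivity of $\ad e$ yields $[\g_1,\g_1]=[e_1,\tilde\g_0]$, as desired.

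Finally, surjectivity of $\ad e_1\colon\tilde\g_0\to\tilde\g_2$ means the orbit of $e_1$ under the connected group $\tilde G_0$ with Lie algebra $\tilde\g_0$ has tangent space $[\tilde\g_0,e_1]=\tilde\g_2$ at $e_1$, hence is open and (being a subvariety of the affine space $\tilde\g_2$) dense, so it is the unique open $\tilde G_0$-orbit in $\tilde\g_2$. By Proposition~\ref{reduction} the weights on $C$ are the weighted Dynkin diagram of a strictly odd nilpotent $\tilde e\in\tilde\g$; since $\tilde h$ is the (dominant) characteristic read off those weights, $\tilde e\in\tilde\g_2$ and $\ad\tilde e\colon\tilde\g_0\to\tilde\g_2$ is surjective for the same $\sla_2$-theoretic reason, so $\tilde e$ lies in that same open orbit. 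Thus $e_1$ and $\tilde e$ are $\tilde G_0$-conjugate, a fortiori conjugate in $\tilde\g$, hence define the same nilpotent orbit, and so the weighted Dynkin diagram of $e_1$ in $\tilde\g$ is given by the weights on $C$. The delicate point is the construction of $\mathfrak q$ with $[e,\mathfrak q]\subseteq\g_2(2)$: this is exactly where the diagram combinatorics enter — which roots sit on which component, and the non-adjacency of $C$ to the other components — and one must check carefully that $\mathfrak n\oplus\mathfrak t'$ genuinely complements $\tilde\g_0$ in $\g_0$.
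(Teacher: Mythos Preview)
Your proof is correct and follows the same overall strategy as the paper's: show that $e_1$ lies in the open orbit of the degree-zero group on $[\g_1,\g_1]=\tilde\g_2$, and identify that orbit with the one containing the strictly odd nilpotent $\tilde e$ supplied by Proposition~\ref{reduction}. The difference is in how surjectivity is obtained and at which level. The paper notes in one line that since $\g_2=[\g_1,\g_1]\oplus\g_2(2)$ is a decomposition into $\g_0$-submodules, $[\g_0,e]=\g_2$ immediately forces $[\g_0,e_1]=[\g_1,\g_1]$ (for each $x\in\g_0$ the two pieces of $[x,e_1+e_2]$ land in the respective summands); this gives an open $G_0$-orbit, and the comparison with $\tilde e$ is then carried out via an intermediate diagram subalgebra $\g'=\tilde\g\oplus(\text{type A factors})$. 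You instead prove the sharper statement $[\tilde\g_0,e_1]=\tilde\g_2$ directly, by constructing an explicit complement $\mathfrak q=\mathfrak n\oplus\mathfrak t'$ of $\tilde\g_0$ in $\g_0$ that annihilates $\g_1$. Your route costs more at the surjectivity step but makes the descent from $\g$ to $\tilde\g$ entirely explicit and bypasses $\g'$; the paper's route reaches surjectivity instantly but leaves the passage from $G_0$-conjugacy to equality of weighted Dynkin diagrams in $\tilde\g$ a bit more implicit.
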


\begin{proof}
We have a reductive group $G_0$ corresponding to $\g_0$ acting on $\g_2=[\g_1,\g_1]+\g_2(2)$, with the element $e=e_1+e_2$ having an open orbit in $\g_2$.
This means that $[\g_0,e_1+e_2]=\g_2$. But this implies that $[\g_0,e_1]=[\g_1,\g_1]$ (and similarly for $e_2$).
Hence $G_0e_1$ is an open orbit in $[\g_1,\g_1]$.

Let us consider an intermediate diagram subalgebra $\tilde\g\subseteq\g'\subseteq\g$ corresponding to the (in general disconnected) diagram, obtained by erasing the nodes with weight $2$ but leaving all other nodes together with their weights intact. It is clear from \ref{reduction} that $\g'$ is a direct sum of $\tilde\g$ and some simple algebras of type A. Hence $e_1$, viewed as an element of this direct sum, obviously has zero summands in all these components of type A.

On the other hand from \ref{reduction} we know that there exists a (strictly odd) nilpotent element $\tilde e$ in $[\g_1,\g_1]$, which has the needed Dynkin diagram.
Then just as $e_1$, we can view $\tilde e$ as a nilpotent in $\g'$, having zero summands in all remaining type A components of $\g'$. It is then clear that this nilpotent
will have the weighted Dynkin diagram obtained as in \ref{reduction}. Moreover it will have an open $G_0$-orbit in $[\g_1,\g_1]$, hence it coincides with the $G_0$-orbit of $e_1$, so $\tilde e$ and $e_1$ have the same weighted Dynkin diagram when viewed as nilpotents in $\g'$. Then obviously they will also have the same weighted Dynkin diagram with respect to $\tilde\g$ since the latter is obtained just by throwing out type A components with zero weights only.
\end{proof}

\begin{remark}
It would be convenient to supplement \ref{coreduction} with the explicit construction, from an $\sla_2$-triple $(e,f,h)$ corresponding to a given nilpotent orbit in $\g$, of an $\sla_2$-triple $(\tilde e,\tilde f,\tilde h)$ for its strictly odd reduction as in \ref{strodef}. Since $\tilde\g$ comes with a grading (determined by the weights on the corresponding subdiagram), the semisimple element $\tilde h$ of $\tilde\g$ is determined by this grading, while $\tilde f$, which we know to exist by \ref{coreduction}, is uniquely determined by $\tilde e$ and $\tilde h$. Thus having an explicit construction of $\tilde f$ would provide an alternative general proof for \ref{coreduction} that would not require separate calculations for the exceptional types. One possibility that comes to mind is to produce $\tilde f$ from $f$ in the same way as we produced $\tilde e$ from $e$ in \ref{e1} --- that is, take $\tilde f=f_1$ where $f=f_1+f_2$ is the unique decomposition of $f\in\g_{-2}$ into a sum of $f_1\in[\g_{-1},\g_{-1}]$ and $f_2\in\g_{-2}(2)$, the latter being the $\g_0$-submodule of $\g_{-2}$ generated by the root vectors corresponding to negatives of the simple roots with weights 2 on the initial weighted Dynkin diagram. However as the following example shows, in general this does not give the correct $\tilde f$.
\end{remark}

\begin{example}
For $\g$ of type D$_6$, consider the nilpotent orbit corresponding to the weighted Dynkin diagram $\fordsix{.2}{1}201011$ (and to the partition 5,3,2,2). One of the nilpotents in this orbit is the following sum of positive root vectors
$$
e:=e_{\fordsix{.1}{.5}110000}+e_{\fordsix{.1}{.5}011110}+e_{\fordsix{.1}{.5}001110}+e_{\fordsix{.1}{.5}001101}+e_{\fordsix{.1}{.5}000111}
$$
where the subscripts denote the linear combinations of simple roots that give the corresponding positive roots. The corresponding $f$ in the $\sla_2$-triple for $e$ is the following combination of negative root vectors:
$$
f:=2f_{\fordsix{.1}{.5}100000} + 4f_{\fordsix{.1}{.5}110000} + 2f_{\fordsix{.1}{.5}011110} - 2f_{\fordsix{.1}{.5}011101} + 2f_{\fordsix{.1}{.5}001110} + 4f_{\fordsix{.1}{.5}001101} + f_{\fordsix{.1}{.5}000111},
$$
with subscripts now designating linear combinations of negatives of simple roots. Thus $h=[e,f]$ determines the grading corresponding to the above weighted Dynkin diagram. It is straightforward to check that in the degree 2 subspace $\g_2$, root vectors corresponding to the combinations $\fordsix{.15}{.75}100000$ and $\fordsix{.15}{.75}110000$ of simple roots span the $\g_0$-submodule $\g_2(2)\subseteq\g_2$ generated by the root vector of $\fordsix{.15}{.75}100000$, i.~e. of the simple root with weight 2, while the remaining positive root vectors from $\g_2$ lie in $[\g_1,\g_1]$. Thus according to \ref{e1}, a strictly odd nilpotent $\tilde e=e_1$ in the diagram subalgebra $\tilde\g$ of type D$_5$ corresponding to the subdiagram obtained by omitting the node with weight 2, is obtained by omitting in the sum for $e$ the leftmost summand (the one that lies in $\g_2(2)$). Thus
$$
\tilde e=e_{\fordsix{.1}{.5}011110}+e_{\fordsix{.1}{.5}001110}+e_{\fordsix{.1}{.5}001101}+e_{\fordsix{.1}{.5}000111}.
$$
Now if we attempt to choose for the companion of $\tilde e$ in the $\sla_2$-triple the element $f_1$ obtained in the same way from $f$, i.~e. by omitting in the sum for $f$ the summands that lie in $\g_{-2}(2)$, we obtain
$$
f_1=2f_{\fordsix{.1}{.5}011110} - 2f_{\fordsix{.1}{.5}011101} + 2f_{\fordsix{.1}{.5}001110} + 4f_{\fordsix{.1}{.5}001101} + f_{\fordsix{.1}{.5}000111}.
$$
However it turns out that $[e_1,f_1]$ is not the semisimple element determining the needed grading of $\tilde\g$. As a matter of fact this element is not semisimple, rather it has form
$$
[e_1,f_1]=h'-e_{\fordsix{.1}{.5}010000}
$$
with $h'$ in the Cartan subalgebra of $\tilde\g$. A correct $\tilde f$ (the one with $[\tilde e,\tilde f]=\tilde h$ an element in the Cartan subalgebra of $\tilde\g$ which gives the correct grading of $\tilde\g$) is
$$
\tilde f=2f{\fordsix{.1}{.5}011110} - 2f{\fordsix{.1}{.5}011101} + 2f{\fordsix{.1}{.5}001101} + f{\fordsix{.1}{.5}000111}
$$
and is thus not obtained from $f$ by projecting it to $[\g_{-1},\g_{-1}]$ or in any other readily apparent way.
\end{example}

Let us add that there are also many examples (even for algebras of type A) when the bracket of the projections $[e_1,f_1]$ of $e$ and $f$ is semisimple but does not induce the required grading on $\tilde\g$.

\section{Maximizing abelian subspaces}

We are interested in abelian subspaces of $\g_1$. First of all, one has the following well-known fact.
\begin{proposition}
Dimension of $\g_1$ is even, and the largest possible dimension of an abelian subspace in $\g_1$ is at most $\frac12\dim\g_1$.
\end{proposition}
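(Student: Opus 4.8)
The plan is to equip $\g_1$ with a canonical nondegenerate skew-symmetric bilinear form and then invoke elementary symplectic linear algebra. First I would fix an invariant nondegenerate symmetric bilinear form $(\cdot\mid\cdot)$ on $\g$ (the Killing form will do) and an $\sla_2$-triple $(e,h,f)$ as in Section~\ref{recoll}. Since the form is $\ad h$-invariant, $(\g_i\mid\g_j)=0$ whenever $i+j\ne0$, so it restricts to a perfect pairing between $\g_j$ and $\g_{-j}$ for each $j$. Then I would define $\omega(x,y):=(f\mid[x,y])$ for $x,y\in\g_1$; this makes sense because $[x,y]\in\g_2$ pairs with $f\in\g_{-2}$, and it is manifestly skew-symmetric.

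The heart of the argument is to check that $\omega$ is nondegenerate. Suppose $\omega(x,y)=0$ for all $y\in\g_1$. By invariance of the form, $([f,x]\mid y)=(f\mid[x,y])=0$ for all $y\in\g_1$; since $[f,x]\in\g_{-1}$ and the pairing $\g_{-1}\times\g_1$ is perfect, this forces $[f,x]=0$. But $\ad f$ is injective on $\g_1$: decomposing $\g$ into irreducible $\sla_2$-submodules under $(e,h,f)$, the $h$-weight $1$ can occur only in submodules of even dimension $2m$, whose lowest weight is $1-2m<1$, so the lowering operator $f$ does not annihilate the weight-$1$ space of any such submodule. Hence $x=0$, and $\omega$ is nondegenerate; in particular $(\g_1,\omega)$ is a symplectic vector space, so $\dim\g_1$ is even.

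Finally, if $\mathfrak a\subseteq\g_1$ is any abelian subspace, then $[x,y]=0$ and hence $\omega(x,y)=0$ for all $x,y\in\mathfrak a$, i.e. $\mathfrak a$ is an isotropic subspace of the symplectic space $(\g_1,\omega)$; therefore $\dim\mathfrak a\le\frac12\dim\g_1$. The only non-formal ingredient, and the step I would expect to need the most care in writing out, is the nondegeneracy of $\omega$, equivalently the injectivity of $\ad f$ on $\g_1$, which rests on the elementary structure of $h$-weights in finite-dimensional $\sla_2$-modules; everything else is bookkeeping with the invariant form and with isotropic subspaces.
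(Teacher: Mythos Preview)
Your proof is correct and follows essentially the same approach as the paper: both define the skew form $\omega(x,y)=\langle f,[x,y]\rangle$ via the Killing form, observe that abelian subspaces are isotropic, and deduce the bound from symplectic linear algebra. The only difference is that you spell out the nondegeneracy argument (via invariance of the form and injectivity of $\ad f$ on $\g_1$ from $\sla_2$-theory), whereas the paper simply asserts it as well known.
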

\begin{proof}
Let $e$ be an element of the orbit, and choose an $\sla_2$-triple $(e,h,f)$ with $e\in\g_2$, and $h$ inducing the grading. Then one may define a bilinear form on $\g_1$ via
$$
(x,y)_f:=\langle f,[x,y]\rangle,
$$
where $\langle-,-\rangle$ is the Killing form. It is well known that the skew-symmetric form $(-,-)_f$ is nondegenerate (since $\ad f:\g_1\to\g_{-1}$ is an isomorphism), so that dimension of $\g_1$ is indeed even. Moreover any commuting elements of $\g_1$ are orthogonal with respect to this form. Since such a form does not possess isotropic subspaces of more than half dimension of the space, we obtain that there are no abelian subspaces of more than half dimension of $\g_1$.
\end{proof}

\begin{remark}
More generally it is known that a nondegenerate skew-symmetric form exists on the homogeneous part $\g_{2i-1}$ of each odd degree --- see \cite[Proposition 1.2]{Pan}. Thus each $\dim\g_{2i-1}$ is even, too.
\end{remark}

We now consider the abelian subalgebras in $\g_1$, separately for simple algebras of classical types (right now) and for algebras of exceptional types (in Section \ref{secomp}).

Let us thus turn to the simple algebras of classical types. For the type A, it has been proved in \cite{Shoji} that a half-dimensional abelian subspace in $\g_1$ exists for any nilpotent orbit.

The central result of this section is the following characterization in terms of the associated partitions, of those strictly odd nilpotent orbits in types B, C or D which admit an abelian subspace of half the dimension in $\g_1$. We will then deduce the general (not necessarily strictly odd) case using strictly odd reductions as in \ref{strodef}.

\begin{theorem}\label{strictheorem}
Given a strictly odd nilpotent in a simple Lie algebra $\g$ of type $\mathrm B$, $\mathrm C$ or $\mathrm D$, there is an abelian subspace of half dimension in $\g_1$ if and only if the partition corresponding to the nilpotent satisfies one of the following conditions:
\begin{itemize}
\item the largest part $\mu$ of the partition is even and there are no other even parts; moreover if $\g$ is of type $\mathrm B$ then $\mu$ has multiplicity $2$.
\item the largest part $\mu$ of the partition is odd, and either there are no other odd parts, or $\g$ is not of type $\mathrm C$, and the only other parts are $\mu-1$ with multiplicity $2$ and $1$ (with any multiplicity).
\end{itemize}
In other words, abelian subspaces of half dimension in $\g_1$ occur precisely for those strictly odd nilpotents which correspond to partitions of the following kind:

\

\begin{tabular}{rllll}
{\bf type C:}&$\left[1^{2\nu_1}3^{2\nu_3}\cdots(2k-1)^{2\nu_{2k-1}}(2k)^\nu\right]$&$(\nu_{2k-1}\nu\ne0)$,& $\left[2^{\nu_2}4^{\nu_4}\cdots(2k)^{\nu_{2k}}(2k+1)^{2\nu}\right]$&$(\nu_{2k}\nu\ne0)$;\\
{\bf type B or D:}&$\left[2^{2\nu_2}4^{2\nu_4}\cdots(2k)^{2\nu_{2k}}(2k+1)^\nu\right]$&$(\nu_{2k}\nu\ne0)$,&$\left[1^{\nu_1}(2k)^2(2k+1)^\nu\right]$&$(\nu_{2k}\nu\ne0)$;\\
{\bf type B:}&$\left[1^{\nu_1}3^{\nu_3}\cdots(2k-1)^{\nu_{2k-1}}(2k)^2\right]$&$(\nu_{2k-1}\ne0)$,\\
{\bf type D:}&$\left[1^{\nu_1}3^{\nu_3}\cdots(2k-1)^{\nu_{2k-1}}(2k)^{2\nu}\right]$&$(\nu_{2k-1}\nu\ne0)$.
\end{tabular}
\end{theorem}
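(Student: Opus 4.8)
The plan is to transfer the question to the standard representation $W$ of $\g$ and work in the weight basis supplied by \ref{basisprop}: grade $W=\bigoplus_j W_j$ by the $\ad h$-eigenvalue, set $n_j:=\dim W_j$, and use that the invariant form on $W$ (symmetric in types $\mathrm B$, $\mathrm D$, symplectic in type $\mathrm C$) pairs $W_j$ perfectly with $W_{-j}$. Since $e$ is strictly odd, its two largest parts are $\mu$ and $\mu-1$ (Section \ref{recoll}), and these already produce every integer eigenvalue in $[-(\mu-1),\mu-1]$; hence the multiset of $\ad h$-eigenvalues is a symmetric interval of integers, $n_j>0$ precisely for $|j|\le M:=\mu-1$, with $n_j=n_{-j}$, and the parity restriction on parts required by the type becomes an explicit constraint on the sequence $(n_j)$. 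In this picture an element of $\g_1$ is a tuple $X=(X_j\colon W_j\to W_{j+1})_{j\ge0}$, its negative-degree components being the form-adjoints $X_{-j-1}=\pm X_j^{*}$; thus $\dim\g_1=\sum_{j\ge0}n_jn_{j+1}$, and $[X,Y]\in\g_2$ is the tuple with $j$-th component $X_{j+1}Y_j-Y_{j+1}X_j$ for $j\ge0$, together with the folded component $X_0Y_0^{*}-Y_0X_0^{*}$ at the self-dual slot $j=-1$. So an abelian subspace of $\g_1$ is exactly a subspace on which $X_{j+1}Y_j=Y_{j+1}X_j$ for all $j\ge0$ and $X_0Y_0^{*}=Y_0X_0^{*}$.

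For the ``if'' direction I would exhibit, for each partition on the list, an abelian subspace of dimension $\tfrac12\dim\g_1$ by the following uniform recipe: pick subspaces $U_j\subseteq W_j$ with $U_0$ coisotropic (i.e.\ $U_0\supseteq U_0^{\perp}$) and put
$$
\mathfrak a:=\bigl\{X\in\g_1\ :\ X_j(W_j)\subseteq U_{j+1}\ \text{and}\ U_j\subseteq\ker X_j\ \text{for all}\ j\ge0\bigr\}.
$$
Then every length-two composite $X_{j+1}Y_j$ vanishes and the coisotropy of $U_0$ kills the folded condition at $j=-1$, so $\mathfrak a$ is abelian of dimension $\sum_{j\ge0}(n_j-\dim U_j)\dim U_{j+1}$. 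What remains is numerical: in each of the four families of partitions the sequence $(n_j)$ is a staircase in which a single thin strand of degrees of one parity runs alongside the strand of the opposite parity, so the cut dimensions $\dim U_j$ are essentially forced, and a short computation shows that the resulting sum equals $\tfrac12\sum_{j\ge0}n_jn_{j+1}$. The clauses ``$\mu$ has multiplicity $2$'' in type $\mathrm B$ and ``$\g$ not of type $\mathrm C$'' are precisely the conditions under which the staircase is thin enough for this to work, and account for the borderline partitions $[1^{\nu_1}(2k)^2(2k+1)^{\nu}]$ and $[1^{\nu_1}3^{\nu_3}\cdots(2k-1)^{\nu_{2k-1}}(2k)^2]$.

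The ``only if'' direction is the crux. Here I would use the nondegenerate skew form $(x,y)_f=\langle f,[x,y]\rangle$ on $\g_1$ introduced above: an abelian subspace is isotropic for it, so it suffices to show that for every strictly odd partition \emph{not} on the list, $\g_1$ carries no abelian Lagrangian (by contrast, in type $\mathrm A$ such Lagrangians always exist \cite{Shoji}, which is why the obstruction must come from the form). In the model above the abelian identity $X_{j+1}Y_j=Y_{j+1}X_j$ says that the quadratic maps $X\mapsto X_{j+1}X_j$ polarize to symmetric bilinear maps on $\mathfrak a$, and from this one extracts rank/interlacing inequalities bounding $\dim\mathfrak a$ in terms of the partial sums of the $n_jn_{j+1}$. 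The strategy is to prove that these inequalities are saturated only by the staircases of the previous step --- so that an $\mathfrak a$ with $\dim\mathfrak a=\tfrac12\dim\g_1$ would already be of the constructed form --- and then to check that this flag-type subspace fails to be Lagrangian (equivalently, that the optimum of $\sum_{j\ge0}(n_j-u_j)u_{j+1}$ over admissible $u_j$ falls strictly below $\tfrac12\dim\g_1$) as soon as the partition gains a second even part, or gains an odd part outside the permitted exception, or violates a multiplicity/type clause. It is convenient to run this as an induction peeling off the two largest parts $\mu,\mu-1$ and comparing $\g_1$ with the $\g_1$ of the remaining partition. I expect the genuine difficulty to lie exactly here: making the rank bound tight and forcing the three fussy clauses --- multiplicity $2$ in type $\mathrm B$, absence of the $[\mu,(\mu-1)^2,1^{\ast}]$ exception in type $\mathrm C$, and its precise shape in types $\mathrm B$ and $\mathrm D$ --- to emerge uniformly rather than by brute enumeration. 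Finally, the passage from strictly odd to arbitrary odd nilpotents is immediate from \ref{coreduction} and \ref{strodef}.
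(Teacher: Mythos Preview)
Your setup and the ``if'' construction are essentially the paper's: the spaces $W_j$ are the $S_j$ there, your flag-subspace $\mathfrak a$ (with $U_j$ playing the role of one half of the basis partition $X_j\sqcup Y_j$) is exactly the abelian subspace the paper builds, and your dimension formula $\sum_{j\ge0}(n_j-\dim U_j)\dim U_{j+1}$ is the expression the paper then maximizes. So far so good, though your treatment of the folded slot at $j=0$ via ``$U_0$ coisotropic'' is coarser than what is actually needed: in type $\mathrm B$ the paper squeezes out an extra $+1$ from the zero-weight vector in $W_0$, and in types $\mathrm B$, $\mathrm D$ there is an exceptional option (a \emph{single} basis vector chosen in $W_1$) which yields $s_0$ rather than $\lfloor s_0/2\rfloor$ from the $j=0$ slot. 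These two anomalies are precisely what produce the fussy clauses you flag at the end, and they will not emerge from a uniform coisotropy condition on $U_0$.

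The real gap is the ``only if'' direction. You invoke \ref{basisprop} in your opening sentence but then abandon it, proposing instead unspecified ``rank/interlacing inequalities'' and an induction peeling off the largest parts, while conceding that ``the genuine difficulty'' lies here. The paper's route is different and complete: \ref{basisprop} is the whole engine. Once an abelian subspace of maximal dimension is known to be spanned by root vectors, commutation becomes a purely combinatorial constraint on which tensor basis elements one may include, and this forces exactly the partition $X_k\sqcup Y_k$ of each weight basis of $S_k$ (with the three type-dependent variants at $k=0$). The paper then proves a short lemma that the resulting function of the $x_k=|X_k|$, being linear in each variable separately, is maximized at an \emph{alternating} choice $x_k\in\{0,s_k\}$; this reduces the whole question to six explicit sums, and subtracting each doubled sum from $\dim\g_1$ together with $s_{k-1}-s_{k+1}=m_k$ collapses the answer to the listed partitions. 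No rank inequalities, no induction, no structure theorem for abelian Lagrangians --- just Borel's fixed point theorem followed by an elementary optimization. Your proposed step of forcing an arbitrary abelian Lagrangian to be flag-type is both unnecessary and, as written, unproved.
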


\begin{proof}
It will be convenient to introduce the following notations: for a partition as above, let $m_k$ be the multiplicity of the number $k$ in it. Moreover let $S_k$ be the $h$-eigensubspace with eigenvalue $k$ in the standard representation, and let $s_k$ denote dimension of this subspace, i.~e. multiplicity of the eigenvalue $k$ for $h$.

As recalled in Section 1 above, the adjoint representation can be identified with the symmetric square of the standard one for type C, and with its exterior square for types B and D.

Because of this, clearly the degree 1 part of the adjoint representation is the direct sum of spaces of the form $S_k^*\otimes S_l$ with $l-k=1$, $k\ge0$, and
$$
\dim\g_1=s_0s_1+s_1s_2+...
$$

Now, from the correspondence described in Section \ref{recoll}, one has
\begin{equation}\label{ses}
\begin{aligned}
s_0&=m_1+m_3+m_5+...\\
s_1&=m_2+m_4+m_6+...\\
s_2&=m_3+m_5+m_7+...\\
s_3&=m_4+m_6+m_8+...\\
...\\
s_{\mu-4}&=m_{\mu-3}+m_{\mu-1}\\
s_{\mu-3}&=m_{\mu-2}+m_\mu\\
s_{\mu-2}&=m_{\mu-1}\\
s_{\mu-1}&=m_\mu
\end{aligned}
\end{equation}

Dimension of the subspace $\g_1$ of grading 1 with respect to the corresponding $\sla_2$-triple is thus given by
$$
s_0s_1+s_1s_2+s_2s_3+s_3s_4+...=\sum_{i,j>0}im_im_{i+2j-1}=m_1m_2+2m_2m_3+m_1m_4+3m_3m_4+2m_2m_5+...
$$

Given an abelian subspace in $\g_1$, using \ref{basisprop} we may assume it has a basis consisting of root vectors. In particular, each of our basis vectors is situated in one of the direct summands $S_k^*\otimes S_{k+1}$.

Note that any elements in $S_{k-1}^*\otimes S_k$ and $S_l^*\otimes S_{l+1}$ commute for $l>k$; whereas when $l=k$, we will obtain a non-commuting pair as soon as our basis contains any elements of the form $x\otimes y\in S_{k-1}^*\otimes S_k$ and $y'\otimes z\in S_k^*\otimes S_{k+1}$ with $y$ and $y'$ mutually dual basis elements. We are thus forced to choose non-intersecting subsets $X_k$, $Y_k$ in the weight vector bases of $S_k$ and include in the basis of the abelian subspace only those $x\otimes y$ which satisfy $x\in X_{k-1}$ and $y\in Y_k$. This does not concern $k=\mu-1$, where $\mu-1$ is the maximal occurring eigenvalue of $h$ ($\mu$, as above, is the largest part of the corresponding partition): in $S_{\mu-1}$ we may choose arbitrary subset of the basis without affecting abelianness; and since we are interested in maximal abelian subspaces, we choose the whole basis of $S_{\mu-1}$.

Moreover any such choice of non-intersecting subsets $X_k$, $Y_k$ of bases of $S_k$ gives indeed an abelian subspace, and we may further assume that $X_k\cup Y_k$ is the whole basis, since otherwise our abelian subspace would not be maximal.

The case $k=0$ is special, and depends on the type considered.

Namely, it may happen that two basis vectors, both from $S_0^*\otimes S_1$, do not commute. Two basis elements of this subspace, being the tensor products of basis vectors corresponding to $\pm\eps_i^{(0)}+\eps_j^{(1)}$ and $\pm\eps_k^{(0)}+\eps_l^{(1)}$ respectively, will commute if and only if the sum $\pm\eps_i^{(0)}+\eps_j^{(1)}\pm\eps_k^{(0)}+\eps_l^{(1)}$ is not a root. This implies that the root vector basis of an abelian subspace in $\g_1$ cannot contain root vectors corresponding to both $\pm\eps_i^{(0)}+\eps_j^{(1)}$ and $\mp\eps_i^{(0)}+\eps_k^{(1)}$ for $j\ne k$ (since the sum of these is the root $\eps_j^{(1)}+\eps_k^{(1)}$).

This is the only restriction on $S_0^*\otimes S_1$ for type D. For type C, there is an additional restriction that an abelian subspace of $\g_1$ cannot contain root vectors corresponding to both $\pm\eps_i^{(0)}+\eps_j^{(1)}$ and $\mp\eps_i^{(0)}+\eps_j^{(1)}$ (since the sum of these is the root $2\eps_j^{(1)}$). For type B, an additional restriction is that an abelian subspace of $\g_1$ cannot contain root vectors corresponding to both $(0+)\eps_j^{(1)}$ and $(0+)\eps_k^{(1)}$ for $j\ne k$ (since the sum of these is the root $\eps_j^{(1)}+\eps_k^{(1)}$).

It follows that to obtain a maximal abelian subspace of $\g_1$, in addition to splitting the weight vector basis of $S_1$ into nonintersecting subsets ($X_1$ and its complement $Y_1$), for any weights $\eps^{(1)}_j$ and $\eps^{(1)}_k$ corresponding to a weight basis vector in $X_1$ we have to pick in $S_0^*\otimes S_1$ the root basis elements corresponding either only to $\eps_i^{(0)}+\eps^{(1)}_j$ and $\eps_i^{(0)}+\eps^{(1)}_k$ or only to $-\eps_i^{(0)}+\eps^{(1)}_j$ and $-\eps_i^{(0)}+\eps^{(1)}_k$ for all possible $i$, but not both. Thus the maximal possible number of basis vectors from $S_0^*\otimes S_1$ which we may include in an abelian subspace of $\g_1$ is either $\left[\frac{s_0}2\right]x_1$ (if we choose either only $\eps_i^{(0)}+\eps^{(1)}_j$ or only $-\eps_i^{(0)}+\eps^{(1)}_j$ for all possible $i$ and $j$) or $s_0$, provided we are not in type C and moreover $X_1$ consists of a single element (corresponding to some $\eps^{(1)}_j$, and we choose root basis vectors corresponding to $\pm\eps_i^{(0)}+\eps^{(1)}_j$ for all possible $i$). In addition, if we are in type B, we may add one more root basis vector $v_0\otimes v_1$ with $v_0$ a weight basis vector with zero weight and $v_1$ some weight basis vector from $X_1$.

Thus for the maximal dimension of the piece of an abelian subspace corresponding to $S_0^*\otimes S_1$ we have the following possibilities:
\begin{center}
\begin{tabular}{c|ccc}
&B&C&D\\
\hline
$x_1=0$&0&0&0\\
$x_1=1$&$s_0$&$\frac{s_0}2$&$s_0$\\
$x_1>1$&$\frac{s_0-1}2x_1+1$&$\frac{s_0}2x_1$&$\frac{s_0}2x_1$
\end{tabular}
\end{center}

This results in the following possibilities for the maximal dimension of an abelian subspace in $\g_1$:
\begin{equation}\label{aposs}
\begin{array}{rl}
\frac{s_0-1}2x_1+1+(s_1-x_1)x_2+(s_2-x_2)x_3+...+(s_{\mu-3}-x_{\mu-3})x_{\mu-2}+(s_{\mu-2}-x_{\mu-2})s_{\mu-1}&\text{(for type B);}\\
&\\
\frac{s_0}2x_1+(s_1-x_1)x_2+(s_2-x_2)x_3+...+(s_{\mu-3}-x_{\mu-3})x_{\mu-2}+(s_{\mu-2}-x_{\mu-2})s_{\mu-1}&\text{(for type C or D);}\\
\\
s_0+(s_1-1)x_2+(s_2-x_2)x_3+...+(s_{\mu-3}-x_{\mu-3})x_{\mu-2}+(s_{\mu-2}-x_{\mu-2})s_{\mu-1}&\text{(for type B or D).}
\end{array}
\end{equation}
where $\mu$ is the largest part of the partition.

We thus want to maximize each of these quantities for $0\le x_k\le s_k$, $k=1,...,\mu-2$.
Note that each of them is linear in all of the $x_k$ separately, hence any possible maxima are attained when every $x_k$ is either $0$ or $s_k$. In fact, more is true:
\begin{lemma}
An abelian subspace of maximal possible dimension in $\g_1$ can be obtained either with $x_{2j-1}=0$, $x_{2j}=s_{2j}$ or with $x_{2j-1}=s_{2j-1}$, $x_{2j}=0$ for all $j$.
\end{lemma}
\begin{proof}
Looking at the subsum
$$
...+(s_{k-2}-x_{k-2})x_{k-1}+(s_{k-1}-x_{k-1})x_k+(s_k-x_k)x_{k+1}+...
$$
determining dimension of the abelian subspace, it is easy to see that each of the following changes:
$$
\begin{array}{llcll}
x_{k-1}=0,&x_k=0&\mapsto&x_{k-1}=0,&x_k=s_k,\\
x_{k-1}=s_{k-1},&x_k=s_k&\mapsto&x_{k-1}=s_{k-1},&x_k=0
\end{array}
$$
does not decrease the dimension of the abelian subspace.

Indeed, these changes do not affect any other summands except those in the above subsum. The first change transforms
$$
...+(s_{k-2}-x_{k-2})0+(s_{k-1}-0)0+(s_k-0)x_{k+1}+...\mapsto...+(s_{k-2}-x_{k-2})0+(s_{k-1}-0)s_k+0x_{k+1}+...,
$$
i.~e. changes the sum by the amount equal to the change from $s_kx_{k+1}$ to $s_{k-1}s_k$. But $x_{k+1}\le s_{k+1}$, and $s_{k+1}\le s_{k-1}$ by \eqref{ses}, so that indeed the sum does not decrease.

Similarly, the second change transforms
$$
...+(s_{k-2}-x_{k-2})s_{k-1}+(s_{k-1}-s_{k-1})s_k+(s_k-s_k)x_{k+1}+...\mapsto...+(s_{k-2}-x_{k-2})s_{k-1}+(s_{k-1}-s_{k-1})0+(s_k-0)x_{k+1}+...,
$$
i.~e. changes the sum by the amount equal to the change from $0$ to $s_kx_{k+1}$, which is obviously a nondecreasing change.

Now using the above changes we may arrive at one of the needed choices. For simplicity, let us encode a given choice of $x$'s by a sequence of zeroes and ones (at the $k$th place of the sequence stands zero if $x_k=0$ and one if $x_k=s_k$). We are allowed to perform ``local transformations'' of the kind $\cdots00\cdots\mapsto\cdots01\cdots$ and $\cdots11\cdots\mapsto\cdots10\cdots$. Using one of these transformations we can always shift the place of the leftmost occurrence of two consecutive identical symbols to the right: say, if this leftmost occurrence is $\cdots11\cdots$ we change it to $\cdots10\cdots$ and if it is $\cdots00\cdots$ we change it to $\cdots01\cdots$, and in the worst case the place of the leftmost occurrence of consecutive identical symbols still shifts to the right by at least one position. Thus if we keep applying the appropriate transformations to the leftmost occurrence of consecutive identical symbols we inevitably arrive either at $10101\cdots$ or at $01010\cdots$.
\end{proof}

Applying this in \eqref{aposs} we obtain that the maximal possible dimension of an abelian subspace in $\g_1$ can only be equal to one of the following six sums:
$$
\begin{array}{r|rl}
\frac{s_0-1}2s_1+1+s_2s_3+s_4s_5+...&s_1s_2+s_3s_4+s_5s_6+...&\text{(for type B)}\\
\frac{s_0}2s_1+s_2s_3+s_4s_5+...&s_1s_2+s_3s_4+s_5s_6+...&\text{(for types C, D)}\\
s_0+s_2s_3+s_4s_5+...&s_0+(s_1-1)s_2+s_3s_4+s_5s_6+...&\text{(for types B, D)}
\end{array}
$$
To find out whether there is an abelian subspace of half the dimension in $\g_1$ is thus equivalent to finding out whether subtracting from the dimension of $\g_1$, i.~e. from $s_0s_1+s_1s_2+...$, one of these sums doubled gives zero, i.~e. whether one of the sums
$$
\begin{array}{lr|lrl}
s_0s_1+s_1s_2+...&-2(\frac{s_0-1}2s_1+1+s_2s_3+s_4s_5+...)
&s_0s_1+s_1s_2+...&-2(s_1s_2+s_3s_4+s_5s_6+...)
&\text{(B)}\\
s_0s_1+s_1s_2+...&-2(\frac{s_0}2s_1+s_2s_3+s_4s_5+...)
&s_0s_1+s_1s_2+...&-2(s_1s_2+s_3s_4+s_5s_6+...)
&\text{(C, D)}\\
s_0s_1+s_1s_2+...&-2(s_0+s_2s_3+s_4s_5+...)
&s_0s_1+s_1s_2+...&-2(s_0+(s_1-1)s_2+s_3s_4+s_5s_6+...)
&\text{(B, D)}
\end{array}
$$
is zero.

Simplifying, we obtain respectively
$$
\begin{array}{rl|rll}
s_1-2+&s_1s_2-s_2s_3+s_3s_4-s_4s_5+s_5s_6-...
&&s_0s_1-s_1s_2+s_2s_3-s_3s_4+s_4s_5-...
&\text{(B)}\\
&s_1s_2-s_2s_3+s_3s_4-s_4s_5+...
&&s_0s_1-s_1s_2+s_2s_3-s_3s_4+...
&\text{(C, D)}\\
-2s_0+s_0s_1+&s_1s_2-s_2s_3+s_3s_4-...
&-2s_0+2s_2+&s_0s_1-s_1s_2+s_2s_3-s_3s_4+...
&\text{(B, D)}
\end{array}
$$
Rewriting this further as
$$
\begin{array}{rl|rll}
s_1-2+&(s_1-s_3)s_2+(s_3-s_5)s_4+(s_5-s_7)s_6+...
&(s_0-s_2)s_1+&(s_2-s_4)s_3+(s_4-s_6)s_5+...
&\text{(B)}\\
&(s_1-s_3)s_2+(s_3-s_5)s_4+(s_5-s_7)s_6+...
&(s_0-s_2)s_1+&(s_2-s_4)s_3+(s_4-s_6)s_5+...
&\text{(C, D)}\\
s_0(s_1-2)+&(s_1-s_3)s_2+(s_3-s_5)s_4+...
&(s_0-s_2)(s_1-2)+&(s_2-s_4)s_3+(s_4-s_6)s_5+...
&\text{(B, D)}
\end{array}
$$
and taking \eqref{ses} into account this can be rewritten as
$$
\begin{array}{rl|rll}
s_1-2+&m_2s_2+m_4s_4+m_6s_6+...
&m_1s_1+&m_3s_3+m_5s_5+...
&\text{(B)}\\
&m_2s_2+m_4s_4+m_6s_6+...
&m_1s_1+&m_3s_3+m_5s_5+...
&\text{(C, D)}\\
s_0(s_1-2)+&m_2s_2+m_4s_4+...
&m_1(s_1-2)+&m_3s_3+m_5s_5+...
&\text{(B, D)}
\end{array}
$$

Let us now assume that our nilpotent is strictly odd, which in terms of the corresponding partition means that $m_{\mu-1}>0$ (here as before $\mu$ is the largest nonzero part of the partition). This then implies that all multiplicities $s_i$ are nonzero. Thus to obtain an abelian subspace of half the dimension in $\g_1$ we have the following possibilities:
$$
\begin{array}{rl|rll}
\text{$s_1=2$ and }&\text{$m_{2k}=0$ for $2k<\mu$}
&&\text{$m_{2k-1}=0$ for $2k-1<\mu$}
&\text{(B)}\\
&\text{$m_{2k}=0$ for $2k<\mu$}
&&\text{$m_{2k-1}=0$ for $2k-1<\mu$}
&\text{(C, D)}\\
\text{$s_1=2$ and }&\text{$m_{2k}=0$ for $2k<\mu$}
&\text{$m_1=0$ or $s_1=2$, and }&\text{$m_{2k-1}=0$ for $1<2k-1<\mu$}
&\text{(B, D)}
\end{array}
$$

We now make the following observations, according to the parity of $\mu$:
\begin{itemize}
\item if $\mu$ is odd, then the cases in the first column are not realizable, since they require that the partition has no even parts, while by strict oddity both $m_{\mu-1}$ and $m_\mu$ must be nonzero;
\item if $\mu$ is even, the cases in the second column are not realizable by exactly the same reason.
\end{itemize}

Taking this into account, we are left with the following cases: for $\mu$ even,
$$
\begin{array}{l|ll}
\text{$m_2=m_4=...=m_{\mu-2}=0$, $m_{\mu-1}>0$, $m_\mu=2$}
&\text{---}
&\text{(B)}\\
\text{$m_2=m_4=...=m_{\mu-2}=0$, $m_{\mu-1}>0$, $m_\mu>0$}
&\text{---}
&\text{(C, D)}\\
\text{$m_2=m_4=...=m_{\mu-2}=0$, $m_{\mu-1}>0$, $m_\mu=2$}
&\text{---}
&\text{(B, D)}
\end{array}
$$
and for $\mu$ odd,
$$
\begin{array}{l|ll}
\text{---}
&\text{$m_1=m_3=...=m_{\mu-2}=0$, $m_{\mu-1}>0$, $m_\mu>0$}
&\text{(B)}\\
\text{---}
&\text{$m_1=m_3=...=m_{\mu-2}=0$, $m_{\mu-1}>0$, $m_\mu>0$}
&\text{(C, D)}\\
\text{---}
&\text{$m_3=m_5=...=m_{\mu-2}=0$, $m_\mu>0$ and either $m_1=0$ or $m_2=m_4=...=m_{\mu-3}=0$ and $m_{\mu-1}=2$}
&\text{(B, D)}
\end{array}
$$

Let us also observe the following:
\begin{itemize}
\item for $\mu$ even, the first case is subsumed by the third one;
\item for $\mu$ even, the third case is subsumed by the second one for type D;
\item for $\mu$ odd, the subcase $m_1=0$ of the third case is subsumed by the first one for type B, and by the second one for type D.
\end{itemize}

Taking all of the above into account gives the partitions as described.
\end{proof}

\begin{remark}
One can formulate the theorem more understandably as follows: in case of type C, there is exactly one parity change along the partition, while in cases B or D there might be either one or two parity changes, but if there are two parity changes then there must be only parts equal to $1$, $\mu-1$, $\mu$ and moreover $\mu-1$ must have multiplicity $2$. Moreover for type B there is one more restriction when there is only one parity change: namely, if the largest part is even, its multiplicity must be $2$.
\end{remark}

We now turn to the not necessarily strictly odd nilpotent orbits, using strictly odd reduction from \ref{strodef}. For classical types, its reformulation in terms of partitions is as follows.

\begin{lemma}\label{mainlemma}
Let $\g$ be a simple Lie algebra of classical type, and let $e$ be a nilpotent element of $\g$ corresponding to the partition $[...k^{m_k}\ell^{m_\ell}...n^{m_n}]$, with $...<k<\ell<...<n$ such that $k$ and $\ell$ are of opposite parity while all the larger parts $j$ (those with $\ell\le j\le n$) are of the same parity.

Then the partition $[...k^{m_k}(k+1)^{m_\ell+...+m_n}]$ defines a strictly odd nilpotent in a Lie algebra of the same type, and corresponds to the strictly odd reduction of $e$, as defined in \ref{strodef}.
\end{lemma}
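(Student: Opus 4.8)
The plan is to verify that the partition $[\,\ldots k^{m_k}(k+1)^{m_\ell+\cdots+m_n}\,]$ is the partition encoding the strictly odd reduction of $e$, by checking two things: (a) that this partition is admissible for the given classical type and is strictly odd, and (b) that the weighted Dynkin diagram it produces agrees with the subdiagram of the weighted Dynkin diagram of $e$ obtained by erasing all nodes of weight $2$, with the surviving weights kept. Since the strictly odd reduction is characterized (Proposition \ref{reduction}, Definition \ref{strodef}) precisely as the orbit in $\tilde\g$ whose weighted Dynkin diagram is that surviving component, establishing (a) and (b) finishes the job. Admissibility in (a) is immediate: passing from $e$ to the reduced partition only replaces the block of parts $\ell,\ldots,n$ (all of one parity, with total multiplicity $M:=m_\ell+\cdots+m_n$) by the single part $k+1$ with multiplicity $M$; the parity-constrained multiplicities (even parts with even multiplicity in types B, D; odd parts with even multiplicity in type C) are preserved because $k+1$ has the same parity as $\ell,\ldots,n$ and acquires their pooled multiplicity, while all parts $\le k$ are untouched. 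Strict oddity follows from Proposition 1.2 (the largest part $k+1$ and the next-largest part $k$ differ by $1$).

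The heart of the argument is (b), and here I would use the explicit "partition $\to$ weighted Dynkin diagram" recipe recalled in Section \ref{recoll}: list, for each part $\lambda$, the $h$-eigenvalues $1-\lambda,3-\lambda,\ldots,\lambda-1$; sort all eigenvalues in decreasing order; take consecutive differences; and (for types B, C, D) keep the left half. The key structural fact, also noted in Section \ref{recoll}, is where the first weight $1$ appears: if $\mu=\lambda_n$ is the largest part with multiplicity $m_n$, and $\lambda_{n-1},\ldots,\lambda_{n-j+1}$ are the next parts of the same parity as $\mu$, with the following part $\lambda_{n-j}$ of opposite parity, then the initial segment of the diagram consists only of $0$'s and $2$'s, and the first $1$ occurs at position $j m_n + (j-1)m_{n-1} + \cdots + m_{n-j+1}$. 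In our situation the "same-parity block at the top" is exactly $n,\ldots,\ell$, so the first $1$ of the diagram of $e$ sits at a position $P$ determined solely by $m_\ell,\ldots,m_n$ and the gaps among $\ell,\ldots,n$; and everything strictly before position $P$ is a string of $0$'s and $2$'s coming entirely from the tops of the large blocks.

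I would then argue that erasing the weight-$2$ nodes and their incident structure removes precisely the contribution of those large tops, and that the surviving diagram — read off starting from the first node of weight $\le 1$ — is literally the diagram of a partition in which the parts $\ell,\ldots,n$ have been collapsed to a single part equal to $k+1$. Concretely: the eigenvalue multiset of the reduced partition is obtained from that of $e$ by deleting, for each original large part $\lambda\in\{\ell,\ldots,n\}$, the "outer" eigenvalues $\pm(\lambda-1),\pm(\lambda-3),\ldots$ down to $\pm k$ (or $\pm(k+1)$, depending on parity) and keeping the "inner" ones, which are exactly the eigenvalues $1-(k+1),\ldots,(k+1)-1$ contributed by a part $k+1$; the deleted outer eigenvalues are precisely the ones responsible for the leading $0$/$2$ string and for the weight-$2$ nodes. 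Matching consecutive differences before and after this deletion — the inner eigenvalues interleave with the (unchanged) eigenvalues coming from parts $\le k$ in exactly the same way — gives the claimed equality of weighted Dynkin diagrams; one checks separately that the reduced diagram indeed has leading weight $1$ (no $2$'s), confirming strict oddity diagrammatically and consistently with (a). Finally, since Proposition \ref{e1} identifies the weighted Dynkin diagram of $e_1$ in $\tilde\g$ with the surviving weights, and since by Proposition \ref{reduction}/Definition \ref{strodef} that is what "strictly odd reduction" means, the reduced partition is the partition of the strictly odd reduction of $e$, as claimed.

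The main obstacle I expect is purely bookkeeping: making the "delete the outer eigenvalues of the large blocks" step rigorous for all three types B, C, D at once, including the treatment of the rightmost node of the Dynkin diagram (which depends on the type and which Section \ref{recoll} deliberately suppresses) and the parity split of whether the innermost retained eigenvalue of a large part is $\pm k$ or $\pm(k+1)$. A clean way to sidestep the rightmost-node subtlety is to note that both $e$ and its reduction live in $\g$ of a fixed type and that the first $r-1$ nodes already determine the orbit among partitions of that type satisfying the parity constraints, so matching the first $r-1$ nodes suffices; alternatively one invokes Proposition \ref{e1} directly once the first $1$'s location is pinned down, letting the general theory fix the rest. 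Either route reduces the lemma to the single clean computation about where the first weight $1$ appears, which is already recorded in Section \ref{recoll}.
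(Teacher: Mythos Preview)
Your approach is correct and is essentially the paper's own proof spelled out in detail: the paper simply asserts that, following the partition-to-diagram recipe of Section~\ref{recoll}, passing to the modified partition amounts to removing nodes from the left of the weighted Dynkin diagram until no $2$'s remain (and symmetrically from the right for type~A), and your eigenvalue-multiset computation --- that eigenvalues with $|v|\le k$ have identical multiplicities in the two partitions while those with $|v|>k$ account exactly for the leading $0$/$2$ stretch --- is precisely the content of that ``easy to see''. One caution on logical hygiene: in the paper Proposition~\ref{reduction} for classical types is proved \emph{by} this lemma, so your closing appeals to Propositions~\ref{reduction} and~\ref{e1} are circular as stated; but since your step~(b) is self-contained and already establishes what is needed, simply drop those citations (and note that your worry about the rightmost node dissolves for the same reason: it depends only on the eigenvalues near $0$, which you have shown agree).
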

\begin{proof}
Let us begin by noting that the modified partition is indeed suitable for the same type: if this requires that all parts of the same parity as $k$ have even multiplicity, then we have not touched them; while if this requires that all parts of the same parity as $k+1$ are even, then $\ell$ and all larger parts are of the same parity as $k+1$, so each of the multiplicities $m_\ell$, ..., $m_n$ was even, hence their sum is even too, and we indeed stay with the same type. Moreover the corresponding nilpotent is strictly odd since its largest parts are $k$ and $k+1$.

Now following the correspondence between partitions and weighted Dynkin diagrams described above it is easy to see that passing from the original partition to the one modified as described corresponds to the following modification of the weighted Dynkin diagram: one removes all nodes (and weights) from the left until no more $2$'s are left; for types B, C, D that's all, while for type A one has to do removals symmetrically to that from the right end too.

But this precisely means to leave the connected component of the weighted Dynkin diagram that contains nonzero weights, as described in \ref{reduction} above, so that we indeed obtain the strictly odd reduction of $e$.
\end{proof}

\begin{corollary}\label{maincorollary}
Given a nilpotent in a simple Lie algebra $\g$ of classical type $\mathrm B$, $\mathrm C$ or $\mathrm D$, there is an abelian subspace of half dimension in $\g_1$ if and only if the partition corresponding to the nilpotent satisfies the following conditions:

\begin{tabular}{rl}
type $\mathrm C$:&there is no more than one parity change along the partition;\\
types $\mathrm B$ and $\mathrm D$:&there are no more than two parity changes and, if there is at least one parity change then
\end{tabular}
\begin{itemize}
\item if the largest part of the partition is even, then there is only one parity change, and in the $\mathrm B$ case moreover it must be the unique even part and must have multiplicity $2$;
\item if there are two parity changes, then the largest part of the partition is odd, there is a unique even part, it has multiplicity $2$, and all smaller parts are equal to $1$.
\end{itemize}
Thus, abelian subspaces of half dimension in $\g_1$ occur precisely for nilpotents corresponding to partitions of one of the following kind (with $k\le\ell$ throughout):

\begin{tabular}{rl}
{\bf any type:}&$\left[\cdots(2k-2)^{\nu_{2k-2}}(2k)^{\nu_{2k}}(2\ell+1)^{\nu_{2\ell+1}}(2\ell+3)^{\nu_{2\ell+3}}\cdots\right]$;\\
{\bf type C or D:}&$\left[\cdots(2k-3)^{\nu_{2k-3}}(2k-1)^{\nu_{2k-1}}(2\ell)^{\nu_{2\ell}}(2\ell+2)^{\nu_{2\ell+2}}\cdots\right]$;\\
{\bf type B or D:}&$\left[1^{\nu_1}(2k)^2(2\ell+1)^{\nu_{2\ell+1}}(2\ell+3)^{\nu_{2\ell+3}}\cdots\right]$;\\
{\bf type B:}&$\left[\cdots(2k-3)^{\nu_{2k-3}}(2k-1)^{\nu_{2k-1}}(2\ell)^2\right]$,
\end{tabular}
\end{corollary}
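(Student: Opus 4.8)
The plan is to deduce this from Theorem~\ref{strictheorem} via the strictly odd reduction. First I would dispose of even nilpotents: then $\g_1=0$, so a half-dimensional (namely zero) abelian subspace exists trivially, and a partition with all parts of one parity is already among the listed families (let the ``upper'' or the ``lower'' block be empty). So assume $e$ is odd, with partition $\lambda$. By Corollary~\ref{coreduction} there is an equality of vector spaces $\g_1(e)=\tilde\g_1(\tilde e)$, where $\tilde e\in\tilde\g$ is the strictly odd reduction; hence $\g_1(e)$ has a half-dimensional abelian subspace if and only if $\tilde\g_1(\tilde e)$ does. By Lemma~\ref{mainlemma}, $\tilde\g$ is of the same classical type as $\g$, and $\tilde\lambda$ (the partition of $\tilde e$) is obtained from $\lambda$ by collapsing the top ``parity block'' --- the maximal run $\ell^{m_\ell}\cdots n^{m_n}$ of largest parts all of one parity --- into the single part $(k+1)^{M}$, where $k$ is the largest part of $\lambda$ strictly below $\ell$ (it exists because $e$ is odd) and $M=m_\ell+\cdots+m_n$.

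The second step is to read the dictionary of Theorem~\ref{strictheorem} backwards. Applying it to $\tilde\lambda$ in $\tilde\g$, whose largest part is $k+1$, a half-dimensional abelian subspace of $\tilde\g_1(\tilde e)$ exists precisely in the cases listed there, which I would split according to the parity of $k+1$ (equivalently of the original largest part $\mu=n$). If $k+1$ is even, the theorem forces $\tilde\lambda$ to have no even part other than $k+1$; since the parts of $\tilde\lambda$ other than $k+1$ are exactly the parts of $\lambda$ below $\ell$, this means all of those are odd, i.e.\ $\lambda$ has exactly one parity change, and in type~$\mathrm B$ it forces in addition $M=2$. If $k+1$ is odd, the theorem allows either (a) no odd part of $\tilde\lambda$ besides $k+1$, i.e.\ all parts of $\lambda$ below $\ell$ are even --- again a single parity change --- or (b), only for types $\mathrm B$ and $\mathrm D$, $\tilde\lambda=[1^{\nu_1}\,k^2\,(k+1)^{M}]$ with $k$ even, i.e.\ the parts of $\lambda$ below $\ell$ are precisely $k^2$ together with $1^{\nu_1}$, so $\lambda=[1^{\nu_1}\,k^2\,(\text{odd top block})]$. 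In case (b), $\nu_1=0$ gives one parity change and is already covered by (a); $\nu_1>0$ gives exactly two parity changes, odd largest part, $k$ the unique even part with multiplicity $2$, and all smaller parts equal to $1$. Finally, since the reduction does not change the number of parity changes (collapsing a same-parity run to one part of the opposite parity preserves a single change at that spot) and Theorem~\ref{strictheorem} never outputs a strictly odd partition with three or more parity changes, partitions with $\ge3$ parity changes are excluded; the three surviving possibilities (one change with even largest part; one change with odd largest part; the special two-change shape, only for $\mathrm B$ and $\mathrm D$) are exactly the stated conditions and reassemble into the four displayed families, with only the two ``one change'' possibilities surviving for type~$\mathrm C$.

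The part I expect to need most care is the type~$\mathrm B$ refinement: Theorem~\ref{strictheorem} only yields $M=m_\ell+\cdots+m_n=2$, and one must notice that, because in type~$\mathrm B$ every even part has even multiplicity, $M=2$ forces the top block to be a single even part of multiplicity $2$ --- this is the origin of the clause ``it must be the unique even part and must have multiplicity~$2$'', and the reason this branch is recorded only as the family $[\cdots(2k-1)^{\nu_{2k-1}}(2\ell)^2]$. The type restriction ``$\mathrm B$ or $\mathrm D$, never $\mathrm C$'' in case (b) and in the third family is inherited verbatim from Theorem~\ref{strictheorem}. Beyond this, I do not expect a real obstacle: all the substance lives in \ref{strictheorem} and \ref{mainlemma}, and what remains is bookkeeping --- matching each branch of the theorem to a partition family, checking the parity-change count is preserved, and verifying that the degenerate cases (empty blocks, $\nu_1=0$, even nilpotents) and the overlaps between the families are consistently handled.
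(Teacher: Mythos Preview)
Your proposal is correct and follows essentially the same route as the paper: reduce to the strictly odd case via Corollary~\ref{coreduction} and Lemma~\ref{mainlemma}, apply Theorem~\ref{strictheorem} to the collapsed partition $\tilde\lambda$, and translate the resulting conditions back to $\lambda$. The paper's own proof is a two-sentence pointer to exactly this combination of \ref{mainlemma} and \ref{strictheorem}; you have simply spelled out the bookkeeping (parity-of-$k{+}1$ case split, preservation of the parity-change count, and the type~$\mathrm B$ refinement forcing a single even part of multiplicity~$2$) that the paper leaves to the reader.
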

\begin{proof}
This follows from \ref{mainlemma}. Indeed the latter shows that $\g_1(e)$ for a nilpotent $e$ corresponding to some partition has an abelian subspace of half dimension if and only if $\tilde\g_1(\tilde e)$, as described in \ref{coreduction}, has such a subspace; and this happens if and only if the partition modified as in \ref{mainlemma} satisfies conditions of \ref{strictheorem}.

It remains to note that a partition is of the kind indicated if and only if the partition obtained from it as in \ref{mainlemma} satisfies conditions of \ref{strictheorem}.
\end{proof}

\section{Computations}\label{secomp}

It thus remains to find out which of the strictly odd nilpotent orbits in simple Lie algebras of exceptional type do possess an abelian subspace of half dimension in degree 1.

For that, we used the computer algebra system {\tt GAP}. In the package {\tt SLA} by Willem A. de Graaf included in this system one can compute with nilpotent orbits of arbitrary semisimple Lie algebras. In particular, one obtains canonical bases consisting of root vectors for the homogeneous subspaces of all degrees in the grading of the Lie algebra induced by a nilpotent element.

Using \ref{basisprop}, we can determine abelian subspaces in $\g_1$ as follows. Let $B$ be the basis of $\g_1$ made from positive root vectors.
Let us construct a graph with the set of vertices $B$, where two vertices $e_\alpha$ and $e_\beta$ are connected with an edge if and only if they do not commute,
that is, if and only if $\alpha+\beta$ is a root. Then by \ref{basisprop}, $\g_1$ possesses an abelian subspace of dimension $d$ if and only if the basis
consisting of root vectors has a subset of cardinality $d$ consisting of pairwise commuting root vectors.

Clearly this is equivalent to the corresponding graph having an \emph{independent set} of cardinality $d$ --- that is, a subset consisting of $d$ vertices such that no two of these vertices are connected by an edge. Hence describing all possible dimensions of abelian subspaces in $\g_1$ reduces to listing all possible cardinalities of
independent subsets in the corresponding graph.

There is another package {\tt GRAPE} by Leonard H. Soicher in {\tt GAP} which can be used to list all independent sets in a finite graph. Using this package we determine independent sets of maximal possible cardinality in the graph corresponding to the nilpotent orbit.

The results are given in the tables below. A {\tt GAP} code for computing maximal dimensions of abelian subspaces in $\g_1$ for arbitrary semisimple Lie algebras is available at \cite{progaddress}. In fact the program can list all subsets of any given cardinality of pairwise commuting elements in the root vector basis.

\

As an illustration, here are two cases for E$_6$.

\begin{examples}
The nilpotent orbit with the weighted Dynkin diagram \EDynkin{1, 1, 0, 0, 0, 1}{1} has $\g_1$ of dimension $14$.
The corresponding graph with 14 vertices and edges connecting vertices corresponding to non-commuting root vectors in $\g_1$ looks as follows:

\vfill

\begin{center}
\includegraphics[scale=.4]{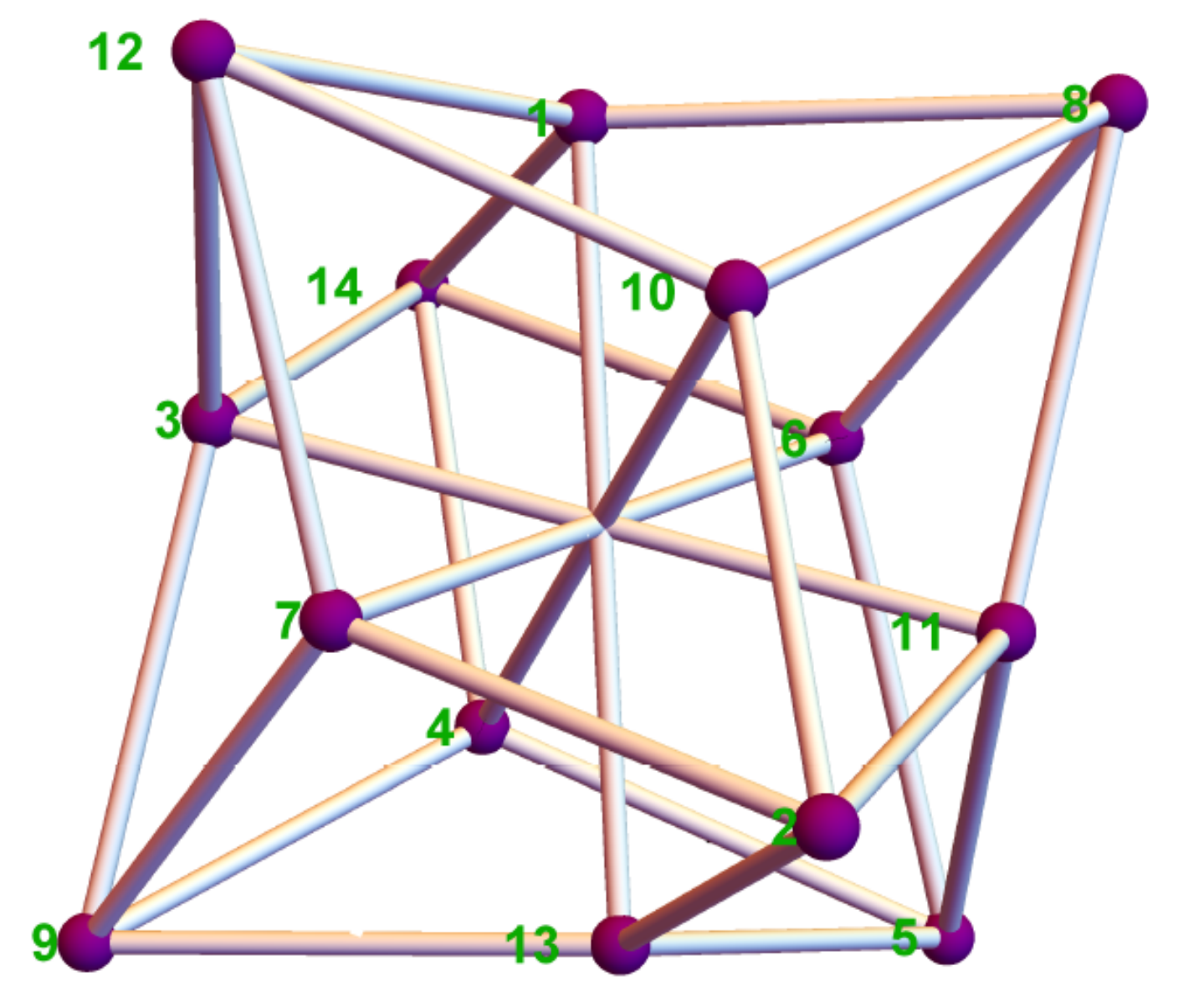}
\end{center}

\vfill

This graph has independent sets with $6$ vertices, e.~g. $\{2,5,8,9,12,14\}$, but any subset on more than $6$ vertices contains a pair of vertices connected with an edge, thus for this nilpotent orbit maximal dimension of an abelian subspace is equal to $6$.

\

Another orbit in E$_6$, with the diagram \EDynkin{1, 0, 1, 0, 1, 0}{1}, has $\g_1$ of dimension $10$ corresponding to the graph

\vfill

\begin{center}
\includegraphics[scale=.4]{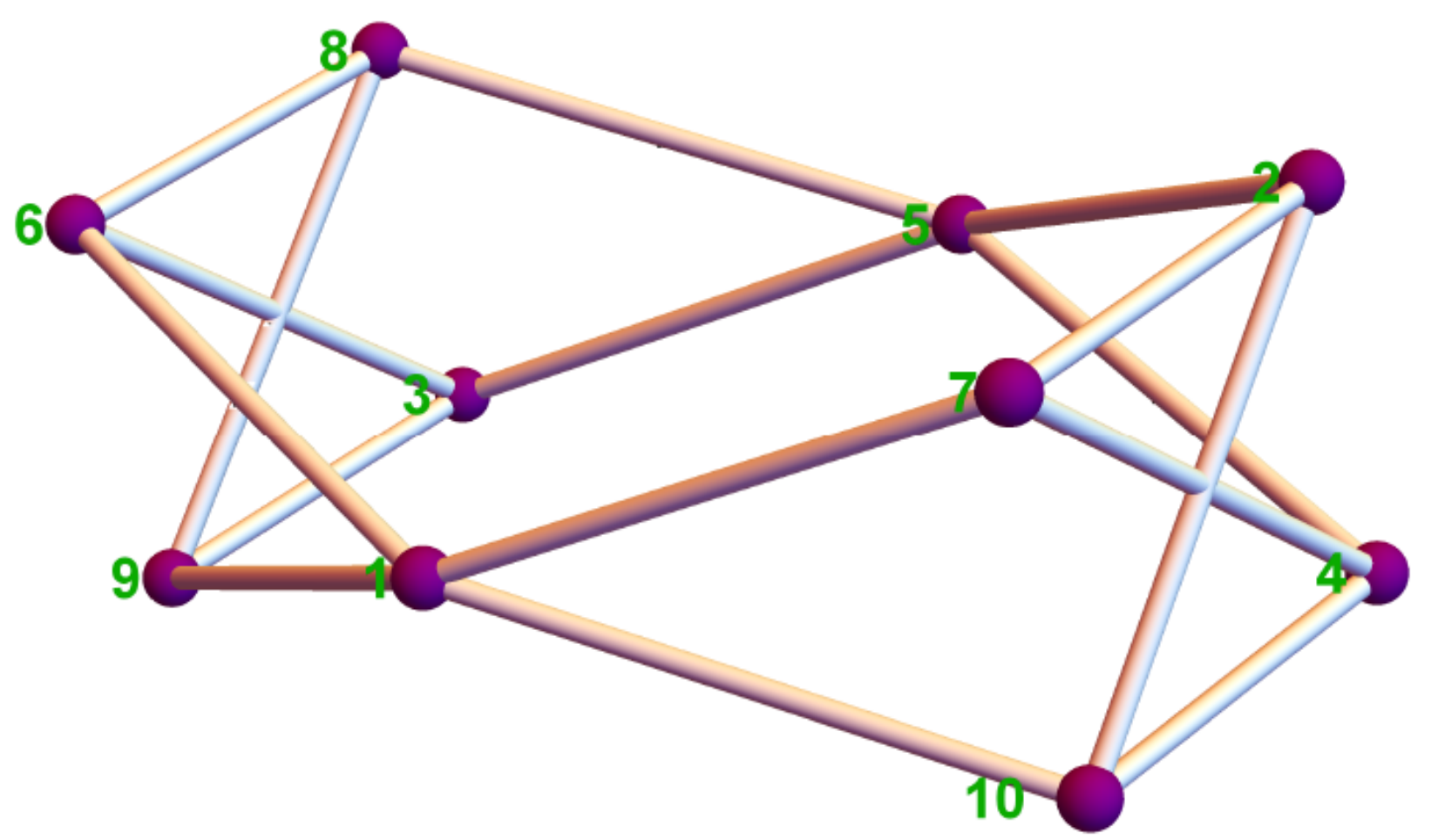}
\end{center}

\vfill

\noindent with 10 vertices. It is easy to find in this graph an independent subset with five elements -- e.~g. $\{1,2,3,4,8\}$.

Thus the orbit in the first example does not possess an abelian subspace of half dimension in $\g_1$, while that in the second one does.
\end{examples}

\section{Tables}

\begin{center}

\def\arraystretch{2.2}
\newlength{\mypar}\setlength{\mypar}{7.67em}

\

\vfill

\begin{tabular}[t]{l|c|c}
\multicolumn{3}{c}{\bf Table G2s}\\
\multicolumn{3}{c}{\parbox[c]{16em}{Strictly odd nilpotent orbits in G$_2$, all with half-abelian $\g_1$:}}\\[2ex]
\hline
Name&Diagram&$\dim\g_1$\\
\hline
&&\\[-5.5ex]
A$_1$&\GDynkin{0, 1}&4\\
$\widetilde{\mathrm A_1}$&\GDynkin{1, 0}&2\\
\end{tabular}

\vfill

\begin{tabular}[t]{l|c|c||l|c|c}
\multicolumn{6}{c}{\bf Table F4s}\\
\multicolumn{6}{c}{Strictly odd nilpotent orbits in F$_4$}\\
\multicolumn{3}{c||}{with half-abelian $\g_1$:}&\multicolumn{3}{c}{without half-abelian $\g_1$:}\\
\hline
Name&Diagram&$\dim\g_1$&Name&Diagram&\parbox[c][4em]{\mypar}{\raggedright$\dim\g_1$ (largest dimension of an abelian subspace)}\\
\hline
&&&&\\[-5.5ex]
A$_1$&\FDynkins{0, 1, 0, 0}&14&$\widetilde{\mathrm A_1}$&\FDynkins{1, 0, 0, 0}&8 (2)\\
A$_1$ $+$ $\widetilde{\mathrm A_1}$&\FDynkins{0, 0, 0, 1}&12&A$_2$ $+$ $\widetilde{\mathrm A_1}$&\FDynkins{0, 0, 1, 0}&6 (2)\\
C$_3(a_1)$&\FDynkins{0, 1, 1, 0}&6&$\widetilde{\mathrm A_2}$ $+$ A$_1$&\FDynkins{1, 0, 0, 1}&8 (3)
\end{tabular}

\vfill

\begin{tabular}{l|c|c||l|c|c}
\multicolumn{6}{c}{\bf Table E6s}\\
\multicolumn{6}{c}{Strictly odd nilpotent orbits in E$_6$}\\
\multicolumn{3}{c||}{with half-abelian $\g_1$:}&\multicolumn{3}{c}{without half-abelian $\g_1$:}\\
\hline
Name&Diagram&$\dim\g_1$&Name&Diagram&\parbox[c][4em]{\mypar}{\raggedright$\dim\g_1$ (largest dimension of an abelian subspace)}\\
\hline
&&&&\\[-5.5ex]
A$_1$&\EDynkin{1, 0, 0, 0, 0, 0}{1}&20&A$_2$ $+$ A$_1$&\EDynkin{1, 1, 0, 0, 0, 1}{1}&14 (6)\\
2A$_1$&\EDynkin{0, 1, 0, 0, 0, 1}{1}&16&2A$_2$ $+$ A$_1$&\EDynkin{0, 1, 0, 1, 0, 1}{1}&12 (5)\\
3A$_1$&\EDynkin{0, 0, 0, 1, 0, 0}{1}&18\\
A$_2$ $+$ 2A$_1$&\EDynkin{0, 0, 1, 0, 1, 0}{1}&12\\
A$_3$ $+$ A$_1$&\EDynkin{1, 0, 1, 0, 1, 0}{1}&10\\
A$_4$ $+$ A$_1$&\EDynkin{1, 1, 1, 0, 1, 1}{1}&8
\end{tabular}

\vfill

\

\pagebreak

\

\vfill

\begin{tabular}{l|c|c||l|c|c}
\multicolumn{6}{c}{\bf Table E7s}\\
\multicolumn{6}{c}{Strictly odd nilpotent orbits in E$_7$}\\
\multicolumn{3}{c||}{with half-abelian $\g_1$:}&\multicolumn{3}{c}{without half-abelian $\g_1$:}\\
\hline
Name&Diagram&$\dim\g_1$&Name&Diagram&\parbox[c][4em]{\mypar}{\raggedright$\dim\g_1$ (largest dimension of an abelian subspace)}\\
\hline
&&&&&\\[-5.5ex]
A$_1$&\EDynkin{0, 1, 0, 0, 0, 0, 0}{1}&32&4A$_1$&\EDynkin{1, 0, 0, 0, 0, 0, 1}{1}&26 (11)\\
2A$_1$&\EDynkin{0, 0, 0, 0, 0, 1, 0}{1}&32&A$_2$ $+$ A$_1$&\EDynkin{0, 1, 0, 0, 0, 1, 0}{1}&24 (9)\\
3A$_1'$&\EDynkin{0, 0, 1, 0, 0, 0, 0}{1}&30&2A$_2$ $+$ A$_1$&\EDynkin{0, 0, 1, 0, 0, 1, 0}{1}&20 (8)\\
A$_2$ $+$ 2A$_1$&\EDynkin{0, 0, 0, 1, 0, 0, 0}{1}&24&A$_3$ $+$ 2A$_1$&\EDynkin{0, 1, 0, 0, 1, 0, 1}{1}&18 (7)\\
(A$_3$ $+$ A$_1$)$'$&\EDynkin{0, 1, 0, 1, 0, 0, 0}{1}&18&A$_4$ $+$ A$_1$&\EDynkin{0, 1, 0, 1, 0, 1, 0}{1}&14 (6)\\
D$_4(a_1)$ $+$ A$_1$&\EDynkin{1, 0, 1, 0, 0, 0, 1}{1}&16\\
A$_3$ $+$ A$_2$&\EDynkin{0, 0, 0, 1, 0, 1, 0}{1}&16
\end{tabular}

\vfill

\

\pagebreak

\

\vfill

\begin{tabular}{l|c|c||l|c|c}
\multicolumn{6}{c}{\bf Table E8s}\\
\multicolumn{6}{c}{Strictly odd nilpotent orbits in E$_8$}\\
\multicolumn{3}{c||}{with half-abelian $\g_1$:}&\multicolumn{3}{c}{without half-abelian $\g_1$:}\\
\hline
Name&Diagram&$\dim\g_1$&Name&Diagram&\parbox[c][4em]{\mypar}{\raggedright$\dim\g_1$ (largest dimension of an abelian subspace)}\\
\hline
&&&&&\\[-5.5ex]
A$_1$&\EDynkin{0, 0, 0, 0, 0, 0, 0, 1}{1}&56&2A$_1$&\EDynkin{0, 1, 0, 0, 0, 0, 0, 0}{1}&64 (22)\\
3A$_1$&\EDynkin{0, 0, 0, 0, 0, 0, 1, 0}{1}&54&4A$_1$&\EDynkin{1, 0, 0, 0, 0, 0, 0, 0}{1}&56 (21)\\
A$_2$ $+$ 3A$_1$&\EDynkin{0, 0, 1, 0, 0, 0, 0, 0}{1}&42&A$_2$ $+$ 2A$_1$&\EDynkin{0, 0, 0, 0, 0, 1, 0, 0}{1}&48 (16)\\
A$_3$ $+$ A$_1$&\EDynkin{0, 0, 0, 0, 0, 1, 0, 1}{1}&34&A$_2$ $+$ A$_1$&\EDynkin{0, 1, 0, 0, 0, 0, 0, 1}{1}&44 (17)\\
A$_3$ $+$ A$_2$ $+$ A$_1$&\EDynkin{0, 0, 0, 1, 0, 0, 0, 0}{1}&30&2A$_2$ $+$ 2A$_1$&\EDynkin{0, 0, 0, 0, 1, 0, 0, 0}{1}&40 (16)\\
A$_4$ $+$ A$_2$ $+$ A$_1$&\EDynkin{0, 0, 1, 0, 0, 1, 0, 0}{1}&24&2A$_2$ $+$ A$_1$&\EDynkin{0, 1, 0, 0, 0, 0, 1, 0}{1}&36 (16)\\
E$_7(a_5)$&\EDynkin{0, 0, 0, 1, 0, 1, 0, 0}{1}&18&A$_3$ $+$ 2A$_1$&\EDynkin{0, 0, 1, 0, 0, 0, 0, 1}{1}&36 (15)\\
A$_6$ $+$ A$_1$&\EDynkin{0, 1, 0, 1, 0, 1, 0, 0}{1}&16&A$_3$ $+$ A$_2$&\EDynkin{0, 1, 0, 0, 0, 1, 0, 0}{1}&32 (13)\\
A$_7$&\EDynkin{0, 1, 0, 1, 0, 1, 1, 0}{1}&14&D$_4(a_1)$ $+$ A$_1$&\EDynkin{1, 0, 0, 0, 0, 0, 1, 0}{1}&32 (12)\\
\multicolumn{3}{c||}{}&2A$_3$&\EDynkin{0, 1, 0, 0, 1, 0, 0, 0}{1}&28 (13)\\
\multicolumn{3}{c||}{}&A$_4$ $+$ 2A$_1$&\EDynkin{0, 0, 0, 1, 0, 0, 0, 1}{1}&28 (12)\\
\multicolumn{3}{c||}{}&A$_4$ $+$ A$_1$&\EDynkin{0, 1, 0, 0, 0, 1, 0, 1}{1}&26 (10)\\
\multicolumn{3}{c||}{}&A$_4$ $+$ A$_3$&\EDynkin{0, 0, 0, 1, 0, 0, 1, 0}{1}&24 (10)\\
\multicolumn{3}{c||}{}&A$_5$ $+$ A$_1$&\EDynkin{0, 1, 0, 1, 0, 0, 0, 1}{1}&22 (9)\\
\multicolumn{3}{c||}{}&D$_5(a_1)$ $+$ A$_2$&\EDynkin{0, 0, 1, 0, 0, 1, 0, 1}{1}&22 (8)\\
\multicolumn{3}{c||}{}&D$_6(a_2)$&\EDynkin{1, 0, 1, 0, 0, 0, 1, 0}{1}&20 (9)\\
\multicolumn{3}{c||}{}&E$_6(a_3)$ $+$ A$_1$&\EDynkin{0, 1, 0, 0, 1, 0, 1, 0}{1}&20 (8)\\
\multicolumn{3}{c||}{}&D$_7(a_2)$&\EDynkin{0, 1, 0, 1, 0, 1, 0, 1}{1}&16 (7)
\end{tabular}

\vfill

\

\pagebreak

\

\vfill

\begin{tabular}[t]{l|c|l}
\multicolumn{3}{c}{\bf Table F4o}\\
\multicolumn{3}{c}{\parbox[c]{18em}{(Non-strictly) odd nilpotent orbits in F$_4$, all with half-abelian $\g_1$:}}\\[2ex]
\hline
Name&Diagram&Strictly odd piece\\
\hline&&\\[-5.5ex]
B$_2$&\FDynkino{1, 2, 0, 0}{3,1}&C$_3$ ($2,1^4$)\\
C$_3$&\FDynkino{2, 1, 1, 0}{4,2}&B$_3$ ($3,2^2$)
\end{tabular}

\

\vfill

\

\begin{tabular}[t]{l|c|l}
\multicolumn{3}{c}{\bf Table E6o}\\
\multicolumn{3}{c}{\parbox[c]{18em}{(Non-strictly) odd nilpotent orbits in E$_6$, all with half-abelian $\g_1$:}}\\[2ex]
\hline
Name&Diagram&Strictly odd piece\\
\hline&&\\[-5.5ex]
A$_3$&\EDynkin{2,1,0,0,0,1}{6,2}&A$_5$\\
A$_5$&\EDynkin{1,2,1,0,1,2}{5,1,3}&D$_4$ ($3,2^2,1$)\\
D$_5(a_1)$&\EDynkin{2,1,1,0,1,1}{6,2}&A$_5$
\end{tabular}

\

\vfill

\

\vfill

\

\begin{tabular}{l|c|l||l|c|l}
\multicolumn{6}{c}{\bf Table E7o}\\
\multicolumn{6}{c}{(Non-strictly) odd nilpotent orbits in E$_7$}\\
\multicolumn{3}{c||}{with half-abelian $\g_1$:}&\multicolumn{3}{c}{without half-abelian $\g_1$:}\\
\hline
Name&Diagram&Strictly odd piece&Name&Diagram&Strictly odd piece\\
\hline&&&&\\[-5.5ex]
A$_3$&\EDynkin{0,2,0,0,0,1,0}{7,1,3}&D$_6$ ($2^2,1^8$)&D$_4$ $+$ A$_1$&\EDynkin{1, 2, 1, 0, 0, 0, 1}{7,1,3}&D$_6$ ($3,2^4,1$)\\
D$_5(a_1)$&\EDynkin{0,2,0,1,0,1,0}{7,1,3}&D$_6$ ($3^2,2^2,1^2$)&A$_5$ $+$ A$_1$&\EDynkin{0, 1, 0, 1, 0, 1, 2}{6,1,2}&E$_6$ (2A$_2$ $+$ A$_1$)\\
A$_5'$&\EDynkin{0,1,0,1,0,2,0}{5,1,2}&D$_5$ ($3,2^2,1^3$)\\
D$_6(a_2)$&\EDynkin{1,0,1,0,1,0,2}{6,1,2}&E$_6$ (A$_3$ $+$ A$_1$)\\
D$_5$ $+$ A$_1$&\EDynkin{1,2,1,0,1,1,0}{7,1,3}&D$_6$ ($4^2,3,1$)\\
D$_6(a_1)$&\EDynkin{1,2,1,0,1,0,2}{6,1,3}&D$_5$ ($3^2,2^2$)\\
D$_6$&\EDynkin{1,2,1,0,1,2,2}{5,1,3}&D$_4$ ($3,2^2,1$)
\end{tabular}

\

\vfill

\

\vfill

\

\begin{tabular}{l|c|l||l|c|l}
\multicolumn{6}{c}{\bf Table E8o}\\
\multicolumn{6}{c}{(Non-strictly) odd nilpotent orbits in E$_8$}\\
\multicolumn{3}{c||}{with half-abelian $\g_1$:}&\multicolumn{3}{c}{without half-abelian $\g_1$:}\\
\hline
Name&Diagram&Strictly odd piece&Name&Diagram&Strictly odd piece\\
\hline&&&&&\\[-5.5ex]
A$_3$&\EDynkin{0,1,0,0,0,0,0,2}{7,1,2}&E$_7$ (A$_1$)&D$_4$ $+$ A$_1$&\EDynkin{1, 0, 0, 0, 0, 0, 1, 2}{7,1,2}&E$_7$ (4A$_1$)\\
D$_5(a_1)$ $+$ A$_1$&\EDynkin{0,0,0,1,0,0,0,2}{7,1,2}&E$_7$ (A$_2$ $+$ 2A$_1$)&D$_5(a_1)$&\EDynkin{0, 1, 0, 0, 0, 1, 0, 2}{7,1,2}&E$_7$ (A$_2$ $+$ A$_1$)\\
A$_5$&\EDynkin{0,2,0,0,0,1,0,1}{8,1,3}&D$_7$ ($3,2^2,1^7$)&D$_5$ $+$ A$_1$&\EDynkin{0, 1, 0, 0, 1, 0, 1, 2}{7,1,2}&E$_7$ (A$_3$ $+$ 2A$_1$)\\
D$_6(a_1)$&\EDynkin{1,0,1,0,0,0,1,2}{7,1,2}&E$_7$ (D$_4(a_1)$ $+$ A$_1$)&E$_6(a_1)$ $+$ A$_1$&\EDynkin{0, 1, 0, 1, 0, 1, 0, 2}{7,1,2}&E$_7$ (A$_4$ $+$ A$_1$)\\
E$_7(a_4)$&\EDynkin{0,0,0,1,0,1,0,2}{7,1,2}&E$_7$ (A$_3$ $+$ A$_2$)&D$_6$&\EDynkin{1, 2, 1, 0, 0, 0, 1, 2}{7,1,3}&D$_6$ ($3,2^4,1$)\\
E$_7(a_3)$&\EDynkin{0,2,0,1,0,1,0,2}{7,1,3}&D$_6$ ($3^2,2^2,1^2$)&E$_6$ $+$ A$_1$&\EDynkin{0, 1, 0, 1, 0, 1, 2, 2}{6,1,2}&E$_6$ (2A$_2$ $+$ A$_1$)\\
D$_7$&\EDynkin{1,2,1,0,1,1,0,1}{8,1,3}&D$_7$ ($5,4^2,1$)\\
E$_7(a_2)$&\EDynkin{1,0,1,0,1,0,2,2}{6,1,2}&E$_6$ (A$_3$ $+$ A$_1$)\\
E$_7(a_1)$&\EDynkin{1,2,1,0,1,0,2,2}{6,1,3}&D$_5$ ($3^2,2^2$)\\
E$_7$&\EDynkin{1,2,1,0,1,2,2,2}{5,1,3}&D$_4$ ($3,2^2,1$)
\end{tabular}
\end{center}

\section*{Acknowledgements}
The authors are grateful to the referee for highly professional work, including simplifications of proofs and improvements of exposition.

The third named author wishes to thank Daniele Valeri for discussions on generalization of Miura maps, constructed in \cite{dSKV}.

The second named author gratefully acknowledges help of the user {\tt marmot} from {\tt tex.stackexchange.com} in producing the \TeX\ code that was used to highlight the strictly odd pieces of weighted Dynkin diagrams in the last four tables.

\bibliographystyle{amsplain}

\end{document}